\documentclass[12pt, reqno]{amsart}
\usepackage{amssymb, mathrsfs}
\usepackage{latexsym}
\usepackage{amsmath}
\usepackage{amsthm}
\usepackage{amsfonts}
\usepackage{dsfont, upgreek}
\usepackage{mathtools}
\usepackage{epsfig}
\usepackage{amscd}
\usepackage{graphicx}
\usepackage{tikz-cd}
\usepackage{enumitem}
\setlist[enumerate]{itemsep=0.5ex}

\usepackage{stmaryrd}

\usepackage{color}
\usepackage{tikz}
\usetikzlibrary{patterns}

\usepackage[left=1.4in,right=1.4in,top=1.2in,bottom=1.2in]{geometry}

\usepackage{tikz}
\usetikzlibrary{decorations.markings}
\usetikzlibrary{arrows.meta}

\usepackage{extarrows}

\usepackage{adjustbox}
\usepackage{pgfplots}
\usepgfplotslibrary{colormaps}
\usepackage{slashed}

\usepackage[colorlinks=true,linkcolor=blue,citecolor=blue]{hyperref}

\usepackage[colorinlistoftodos,prependcaption,textsize=tiny]{todonotes}  

\usepackage[all,cmtip]{xy}
\theoremstyle{plain}
\newtheorem{theorem}{Theorem}[section]

\newtheorem{lemma}[theorem]{Lemma}

\theoremstyle{definition}

\newtheorem*{claim*}{Claim}

\theoremstyle{remark}

\numberwithin{equation}{section}

\newcommand{\Sc}{\mathrm{Sc}}

\newcommand{\id}{\mathrm{I}}

\newcommand{\R}{\mathbb{R}}

\newcommand{\supp}{\operatorname{supp}}

\newcommand{\Z}{\mathbb{Z}}

\newcommand{\sph}{\mathbb{S}}
\newcommand{\disk}{\mathbb{D}}
\newcommand{\normal}{\mathbf{n}}
\newcommand{\sys}{\operatorname{sys}}
\newcommand{\nbundle}{\mathcal N}
\newcommand{\sff}{\mathrm{II}}
\newcommand{\vol}{\operatorname{vol}}

\newcommand{\interior}[1]{%
	{\kern0pt#1}^{\mathrm{\,o}}%
}

\makeatletter
\let\save@mathaccent\mathaccent
\newcommand*\if@single[3]{%
	\setbox0\hbox{${\mathaccent"0362{#1}}^H$}%
	\setbox2\hbox{${\mathaccent"0362{\kern0pt#1}}^H$}%
	\ifdim\ht0=\ht2 #3\else #2\fi
}
\newcommand*\rel@kern[1]{\kern#1\dimexpr\macc@kerna}
\newcommand*\wideaccent[2]{\@ifnextchar^{{\wide@accent{#1}{#2}{0}}}{\wide@accent{#1}{#2}{1}}}
\newcommand*\wide@accent[3]{\if@single{#2}{\wide@accent@{#1}{#2}{#3}{1}}{\wide@accent@{#1}{#2}{#3}{2}}}
\newcommand*\wide@accent@[4]{%
	\begingroup
	\def\mathaccent##1##2{%
		\let\mathaccent\save@mathaccent
		\if#42 \let\macc@nucleus\first@char \fi
		\setbox\z@\hbox{$\macc@style{\macc@nucleus}_{}$}%
		\setbox\tw@\hbox{$\macc@style{\macc@nucleus}{}_{}$}%
		\dimen@\wd\tw@
		\advance\dimen@-\wd\z@
		\divide\dimen@ 3
		\@tempdima\wd\tw@
		\advance\@tempdima-\scriptspace
		\divide\@tempdima 10
		\advance\dimen@-\@tempdima
		\ifdim\dimen@>\z@ \dimen@0pt\fi
		\rel@kern{0.6}\kern-\dimen@
		\if#41
		#1{\rel@kern{-0.6}\kern\dimen@\macc@nucleus\rel@kern{0.4}\kern\dimen@}%
		\advance\dimen@0.4\dimexpr\macc@kerna
		\let\final@kern#3%
		\ifdim\dimen@<\z@ \let\final@kern1\fi
		\if\final@kern1 \kern-\dimen@\fi
		\else
		#1{\rel@kern{-0.6}\kern\dimen@#2}%
		\fi
	}%
	\macc@depth\@ne
	\let\math@bgroup\@empty \let\math@egroup\macc@set@skewchar
	\mathsurround\z@ \frozen@everymath{\mathgroup\macc@group\relax}%
	\macc@set@skewchar\relax
	\let\mathaccentV\macc@nested@a
	\if#41
	\macc@nested@a\relax111{#2}%
	\else
	\def\gobble@till@marker##1\endmarker{}%
	\futurelet\first@char\gobble@till@marker#2\endmarker
	\ifcat\noexpand\first@char A\else
	\def\first@char{}%
	\fi
	\macc@nested@a\relax111{\first@char}%
	\fi
	\endgroup
}
\makeatother

\newcommand\overbar{\wideaccent\overline}

\makeatletter
\newcommand*{\transpose}{%
	{\mathpalette\@transpose{}}%
}
\newcommand*{\@transpose}[2]{%
	\raisebox{\depth}{$\m@th#1\intercal$}%
}
\makeatother

\begin{document}
		\title{A $2$-systolic inequality for $\mathbb S^2\times P$}
	
		\author{Jinmin Wang}
		\address[Jinmin Wang]{Institute of Mathematics, Chinese Academy of Sciences}
		\email{jinmin@amss.ac.cn}
\thanks{The first author is partially supported by NSFC 12501169.}
				\author{Zhizhang Xie}
		\address[Zhizhang Xie]{Texas A\&M University}
		\email{xie@tamu.edu}
	
		\begin{abstract} We prove a sharp $2$-systolic inequality for four-dimensional products $\sph^2\times P$, where $P\subset\mathbb R^2$ is an arbitrary convex polygon. Let \[ h=g_{\sph^ 2}+g_{\mathrm{eu}} \] be the standard product metric. If a Riemannian metric $g$ on $\sph^2\times P$ has scalar curvature $\geq \sigma>0$, nonnegative mean curvature on every codimension one face, and dihedral angles no larger than the corresponding dihedral angles of $h$, then both its homotopy and homology $2$-systoles are at most $ \frac{8\pi}{\sigma}.$  This confirms a conjecture of Gromov.
        \end{abstract}
		\maketitle
        
	\section{Introduction}

	Let $(M,g)$ be a compact Riemannian manifold,  possibly with corners. Its $\pi_2$-systole is defined as
	\begin{equation}
		\sys_{2,\pi}(M,g)=\inf\{\textup{Area}(\sph^2,f^*g):f\colon \sph^2\to M\textup{ is  non-zero in } \pi_2(M)\}.
	\end{equation}
	Similarly, its $H_2$-systole is defined as
	\begin{equation}
		\sys_{2,H}(M,g)=\inf\{\textup{Area}_g(\Sigma):[\Sigma]\in H_2(M;\Z)\textup{ is non-zero}\}.
	\end{equation}

    In this paper, we consider four-manifolds of the form \[ M=\sph^2\times\mathcal C, \] where $\mathcal C$ is a compact contractible surface with either smooth or polygonal boundary. In this setting, \[ \pi_2(M)\cong H_2(M;\mathbb Z)\cong\mathbb Z. \] We write $\sys_2(M, g)$ when a statement holds for both $\sys_{2,\pi}(M,g)$ and $\sys_{2,H}(M,g)$. In \cite{GromovGNOSC26}, Gromov conjectured a sharp upper bound for the $2$-systoles of Riemannian metrics on $\sph^2\times P$, where $P$ is a convex polyhedron, in terms of a positive lower bound for the scalar curvature, assuming nonnegative mean curvature of the codimension-one faces and the corresponding dihedral-angle comparison.  Our first main result confirms the conjecture for every convex polygon $P$.

	\begin{theorem}\label{thm:sysPolygon}
		Let $P$ be a convex polygon in $\R^2$. Let $M^4=\sph^2\times P$  and $h=g_{\sph^2}+g_{eu}$ the product metric, where $g_{\sph^2}$ is the standard round metric on the unit sphere $\sph^2$ and $g_{eu}$ is the Euclidean metric on $P$. Let $g$ be a Riemannian metric on $M$. Assume that
		\begin{itemize}
			\item (mean curvature) $H_F(g)\geq 0$ for every codimension one face $F$, 
			\item (scalar curvature) $\Sc(g)\geq \sigma>0$, and
			\item (dihedral angles) $\theta_i(g)\leq \theta_i(h)$, where $\theta_i(g)$ denotes the $i$-th dihedral angle of $g$, and $\theta_i(h)$ denotes the corresponding angle of $h$, equal to the $i$-th angle of $P$.
		\end{itemize}
		Then we have 
        \[ \sys_2(M, g)\leq \frac{8\pi}{\sigma}. \]
	\end{theorem}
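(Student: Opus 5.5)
We argue by contradiction via a Dirac operator (index theory) argument with a twisting bundle pulled back from a comparison map, following the strategy of the spectral / Callias-type approach to Gromov's dihedral rigidity and band-width inequalities.

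\textbf{Setup and reduction.} Since $\pi_2(M)\cong H_2(M;\Z)\cong\Z$ and the Hurewicz map is an isomorphism, every map $f\colon\sph^2\to M$ that is nonzero in $\pi_2$ represents a nonzero class. So it suffices to prove the bound for $\sys_{2,H}$, and in fact for surfaces $\Sigma$ whose class is a nonzero multiple of the generator $[\sph^2\times\{pt\}]$. Suppose, for contradiction, that every such $\Sigma$ has $\textup{Area}_g(\Sigma)>8\pi/\sigma$. We will build a nonzero harmonic spinor that is forbidden by the curvature hypotheses.

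\textbf{Twisted Dirac operator.} Take a degree-one map $\Phi\colon(M,\partial M)\to(\sph^2\times P,\partial)$ (the identity, say). On the $\sph^2$ factor we twist the spinor bundle by the pullback of a unitary line bundle $L$ of degree $\pm1$, with curvature $2$-form $\Omega$. The key point is to choose the connection on $L$ adapted to $g$ rather than to $h$. Consider the (possibly weak) calibration problem dual to the systole: by max-flow/min-cut duality (Federer's real homology duality) there is a closed $2$-form $\omega$ with $\int_{\sph^2}\omega=1$ and comass $\|\omega\|_g\le 1/\sys_{2,H}<\sigma/(8\pi)$. Setting $\Omega=2\pi\omega$ gives $|\Omega|$, in the Clifford-operator norm, bounded by roughly $2\cdot 2\pi\|\omega\|<\sigma/2\cdot\frac12$. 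This makes the twisting curvature term in the Lichnerowicz formula $D^2=\nabla^*\nabla+\frac{\Sc}{4}+\mathcal R^L$ strictly dominated by $\sigma/4$, with $\|\mathcal R^L\|\le |\Omega|_{\mathrm{tr}}$ and $|\Omega|_{\mathrm{tr}}\le 2\cdot 2\pi\|\omega\|<\sigma/4$. (Smoothing $\omega$ costs an arbitrarily small $\varepsilon$.)

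\textbf{Boundary conditions on the polygon.} On $\partial M=\sph^2\times\partial P$ we impose local chirality-type boundary conditions $\epsilon_F\,c(\nu_F)\psi=\psi$ on each face $F$, where the $\epsilon_F$ are built, as in the Gromov dihedral-angle literature (Brendle, and Wang--Xie--Yu for polytopes), from constant Clifford elements determined by the outer normals of $P$ in $\R^2$. The index of the resulting twisted operator is then computed by deforming $g$ to $h$. For $h$, the index equals $\int_{\sph^2}c_1(L)=\pm1$ times the contribution of the polygon factor, which is $1$ for these corner conditions. This holds because the relative index is topological and the dihedral-angle comparison $\theta_i(g)\le\theta_i(h)$ keeps the boundary conditions compatible at the corners throughout the homotopy. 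Thus there is a nonzero $\psi$ with $D\psi=0$.

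\textbf{Contradiction.} Integrating the Lichnerowicz formula gives
\[0=\int|\nabla\psi|^2+\int\Big(\tfrac{\Sc}{4}+\mathcal R^L\Big)|\psi|^2+\int_{\partial M}\Big(\tfrac{H}{2}-\text{corner terms}\Big)|\psi|^2.\]
The boundary term is nonnegative by $H_F\ge0$. The contributions from the edges, which arise after smoothing, are nonnegative exactly under $\theta_i(g)\le\theta_i(h)$. The interior term is positive because $\sigma/4>\|\mathcal R^L\|$. Hence $\psi=0$, a contradiction.

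\textbf{Main obstacle.} The hardest step is the corner analysis, namely establishing self-adjointness, Fredholmness and the index computation with face-dependent boundary conditions along codimension-two edges $\sph^2\times\{\text{vertex}\}$. I would handle it by a Brendle-type smoothing of $P$ into a convex domain, tracking the mean curvature gained at the smoothed edges. The other delicate point is the sharp constant: the comass bound must translate into the operator bound $\|\mathcal R^L\|\le 2\pi\|\omega\|\cdot 2$. This is precisely where $8\pi$ arises, since for the model $\sph^2\times P$ we have $\sigma=2$ and area $4\pi=8\pi/2$.
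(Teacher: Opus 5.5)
Your Dirac-operator route differs from the paper's, but it fails at the duality step, and the failure is not merely technical. Federer's duality between the mass norm on $H_2(M;\R)$ and the comass norm on $H^2(M;\R)$ gives
\[
\inf\Bigl\{\mathrm{comass}_g(\omega)\colon d\omega=0,\ \int_{\sph^2}\omega=1\Bigr\}=\frac{1}{\mathrm{stsys}_2(g)},\qquad \mathrm{stsys}_2(g)=\lim_{k\to\infty}\frac1k\inf\bigl\{\mathrm{mass}_g(T)\colon [T]=k\,[\sph^2\times\{p\}]\bigr\}.
\]
Here $\mathrm{stsys}_2(g)\le\sys_{2,H}(g)$, and equality is guaranteed only in codimension one; for $2$-classes in a $4$-manifold it can fail. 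So the hypothesis $\sys_{2,H}(g)>8\pi/\sigma$ does not produce a form of comass $<\sigma/(8\pi)$, and your inequality $\|\omega\|_g\le 1/\sys_{2,H}$ goes the wrong way. A twisted-Dirac contradiction can therefore at best bound the stable systole from above. An upper bound on the smaller quantity $\mathrm{stsys}_2$ says nothing about $\sys_{2,H}$ or $\sys_{2,\pi}$. This is why the Dirac-method results the paper cites (Orikasa, Stryker, Cecchini--Hirsch--Zeidler) are stable inequalities, and why the paper says the Dirac approach does not yet apply here.

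Your opening reduction has the same directional error. The Hurewicz argument shows $\sys_{2,H}\le\sys_{2,\pi}$, so bounding $\sys_{2,H}$ does not bound $\sys_{2,\pi}$. The paper avoids both problems by producing an actual surface. It first reduces the polygon case to Theorem~\ref{thm:sysStrict}: a conformal change gives strict mean convexity, the edges are smoothed so that the turning angle $\beta_i\ge\beta_i^0$ dominates $|df_\rho|$, and an eigenfunction conformal change absorbs the $O(\rho)$-supported error. It then minimizes a capillary functional, passes to two warped products, and applies Gauss--Bonnet. The output is an embedded $2$-sphere $Z$ with $[Z]\neq0$ and $\mathrm{Area}_g(Z)\le 8\pi/(\sigma+3\delta/4)$, which bounds both systoles at once.

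Even the stable statement would not come out sharp from your estimate. Write $\Omega=\lambda_1e^{12}+\lambda_2e^{34}$ in normal form. The twisting term $\mathcal R^L$ has eigenvalues $\pm(\lambda_1+\lambda_2)$ and $\pm(\lambda_1-\lambda_2)$ on the two half-spin bundles. Hence $\|\mathcal R^L\|=|\lambda_1|+|\lambda_2|\le 2\,\mathrm{comass}(\Omega)=4\pi\,\mathrm{comass}(\omega)$. With $\mathrm{comass}(\omega)<\sigma/(8\pi)$ this gives $\|\mathcal R^L\|<\sigma/2$, not $<\sigma/4$; your step ``$2\cdot 2\pi\|\omega\|<\sigma/4$'' drops a factor of $2$. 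Then $\frac{\Sc}{4}-\|\mathcal R^L\|$ need not be positive, and the argument yields only $\mathrm{stsys}_2\le 16\pi/\sigma$.

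The model case is sharp only because there $\Omega$ is decomposable ($\lambda_2=0$). Nothing forces the comass-optimal form for $g$ to be decomposable. Your boundary conditions $\epsilon_F c(\nu_F)$ mix chiralities, so you cannot restrict to one half-spin bundle to recover $|\lambda_1-\lambda_2|$. Finally, the self-adjointness, Fredholmness and index claims at the edges $\sph^2\times\{\text{vertex}\}$ are asserted rather than proved. That is secondary, though. The essential missing idea is how to get from a curvature contradiction to an actual essential surface of controlled area, which the paper obtains from minimizing hypersurfaces rather than from a calibrating form.
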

	
	$2$-systolic inequalities under scalar curvature assumptions have been extensively studied in the literature. For three dimensional manifolds, Bray, Brendle, Eichmair, Neves \cite{BBEN10,BBA10} studied the least area of embedded $\R P^2$ and $\sph^2$ in three manifolds. More recently, Xu \cite{Xu25} established a topological gap phenomenon for $3$-manifolds that are not covered by $\sph^2\times\sph^1$. He--Li \cite{HeLi26} proved a relative $\pi_2$-systolic inequality for three-dimensional prisms.  In higher dimensions, Zhu \cite{Zhu20,Zhu23} obtained sharp $2$-systolic estimates for nontrivial $2$-spheres in manifolds modeled on $\sph^2\times\mathbb T^n$. Using Dirac-operator methods, Orikasa \cite{Orikasa}, Stryker \cite{Stryker}, and Cecchini--Hirsch--Zeidler \cite{CHZ26} studied stable systolic inequalities, obtaining a sharp stable $2$-systolic inequality for $\mathbb CP^n$ as well as non-sharp estimates for several other classes of manifolds.

	
	Our proof uses capillary minimal hypersurfaces. The approach we take is largely motivated by the work of Wang--Wang--Zhu \cite{WangWangZhu} on the scalar-mean rigidity for Euclidean balls. See also the work of  Ko--Yao \cite{KoYao}. One of the main difficulties of adapting the approach of Wang--Wang--Zhu \cite{WangWangZhu} to the geometric setup of the current paper is due to the existence of singularities from the corner structures of the metrics.  A similar challenge also appears in Gromov's dihedral rigidity conjecture, which was proved by Wang--Xie--Yu \cite{Wang:2021tq,Wang:2022vf,WangXieYu26} by establishing an index theory of Dirac type operator on polyhedral manifolds. However,  the Dirac operator approach does not yet seem to apply to $2$-systolic inequalities on manifolds such as $\sph^2\times P$. On the other hand, the capillary minimal hypersurface theory does not seem to be fully developed on spaces with singular boundaries. In this paper, instead of working directly on $\sph^2\times P$, we shall first make careful   approximations of $\sph^2\times P$ by manifolds with smooth boundary. This should be compared to the smooth approximation approach proposed by Gromov for his dihedral rigidity conjecture, which was later further developed by Brendle \cite{Brendle} and Bi \cite{Bi26} to reduce the problem to the case of  manifolds with smooth boundary and mean curvature comparison in the integral sense. In the current paper,  we give another smoothing procedure. In particular,  we combine geometric smooth approximations with conformal changes of metrics to reduce Gromov's $2$-systole conjecture to the case of manifolds with smooth boundary and \emph{pointwise} mean curvature comparison.  The latter is essential for the existence of capillary minimal hypersurfaces, which is the initial step of the dimension reduction argument in the sense of Schoen-Yau. 
    

    The same method also proves the following smooth-boundary version of Gromov's $2$-systole conjecture, where the  model spaces are  direct products of $\sph^2$ and Euclidean convex smooth domains.
	
	\begin{theorem}\label{thm:sysSmooth}
		Let $D$ be a compact convex domain in $\R^2$. Let $M^4=\sph^2\times D$ and $h=g_{\sph^2}+g_{eu}$. Let $g$ be a smooth Riemannian metric on $M$, and let $f\colon (\partial M,g)\to (\partial D,h)$ be the projection map. Assume that
		\begin{itemize}
			\item $H(g)\geq |df|\cdot H(h)$, and
			\item $\Sc(g)\geq \sigma>0$.
		\end{itemize}
		Then $sys_2(g)\leq \frac{8\pi}{\sigma}$.
	\end{theorem}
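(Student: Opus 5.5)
We argue by contradiction, following the capillary dimension-reduction scheme of Wang--Wang--Zhu. Suppose $\sys_2(g)>\frac{8\pi}{\sigma}$. By scaling we may take $\sigma=2$, so the assumption is $\sys_2(g)>4\pi$.

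\emph{Step 1: a capillary minimal hypersurface.} The model $(\sph^2\times D,h)$ is foliated by the three-dimensional slices $\sph^2\times\ell$, where $\ell$ is a line segment in $D$. Each slice meets $\partial M$ at a contact angle determined by the angle between $\ell$ and $\partial D$. Fix a direction and the corresponding angle function $\theta$ on $\partial M$, obtained by pulling back through $f$. We minimize the capillary functional
\[
\mathcal A(\Omega)=\mathcal H^3(\partial\Omega\cap\interior M)-\int_{\partial\Omega\cap\partial M}\cos\theta
\]
over Caccioppoli sets $\Omega$ that separate the two sides of $M$. The pointwise hypothesis $H(g)\ge |df|\,H(h)$ is what supplies barriers: it makes the model slices near the two extreme tangent lines act as sub- and super-solutions, so a minimizer exists and stays away from them. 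In dimension four the minimizer $\Sigma^3$ is smooth, with free boundary at angle $\theta$. Because $\Sigma$ separates, $\Sigma$ carries the $\sph^2$-class, so $\sys_2(\Sigma)\ge\sys_2(M)$.

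\emph{Step 2: stability and warping.} Stability of the capillary functional gives
\[
\int_\Sigma|\nabla\phi|^2-\tfrac12(\Sc_g-\Sc_\Sigma+|A|^2)\phi^2\ \ge\ \int_{\partial\Sigma}q\,\phi^2
\]
for all $\phi$, where $q$ is a boundary term involving $H$, $\sff$ of $\partial M$, and the derivative of $\theta$. The mean curvature comparison $H(g)\ge|df|\,H(h)$ is exactly what makes $q$ bounded below by the term that vanishes on the model, as in Wang--Wang--Zhu's computation for balls. We take the first eigenfunction $u>0$ and form the warped product $\Sigma\times_u\sph^1$, or equivalently work with the operator $-\Delta+\tfrac12\Sc_\Sigma$. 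This yields a spectral lower bound $\Sc_\Sigma-2\Delta\log u$-type $\ge 2$ on $\Sigma$, with boundary mean curvature of $\partial\Sigma$ in the weighted sense $\ge 0$.

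\emph{Step 3: reduce once more.} We repeat the construction on $\Sigma^3$ with the $u$-weighted area, minimizing a weighted capillary functional to get a surface $S^2\subset\Sigma$ that still carries the $\sph^2$ class. Weighted stability on $S$, combined with Gauss--Bonnet (using geodesic-curvature control along $\partial S$ if $S$ meets the boundary), gives
\[
2\cdot\mathrm{Area}(S)\ \le\ \int_S \Sc\ \le\ 4\pi\chi(S)\le 8\pi,
\]
so $\mathrm{Area}(S)\le 4\pi$. This contradicts $\sys_2>4\pi$. Alternatively, we find a closed sphere component of $S$ in the $\sph^2$ class and apply the Bray--Brendle--Neves style estimate on $\Sigma$. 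We prefer to arrange that the final surface is a closed sphere, which is achieved by choosing the slice to be interior-dominated; in that case Gauss--Bonnet uses only $\chi(\sph^2)=2$.

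\emph{Main obstacle.} The hard part is Step 1 together with the boundary term in Step 2. Existence and regularity of the capillary minimizer with variable angle must be established from only the pointwise comparison $H\ge|df|H(h)$, and one must check that the $\partial\Sigma$ contributions keep the correct sign through both reductions, so that no boundary term spoils the area bound. If barriers for the capillary problem fail, the first fallback is to approximate $g$ by metrics with strict inequalities via a small conformal change, then pass to the limit.
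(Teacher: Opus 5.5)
Your Step 1 matches the paper's setup: the angle between the parallel slices and $\partial D$ is $\Psi\circ G\circ f$, where $G$ is the Gauss map of $\partial D$ (so $|dG|=H(h)$) and $\Psi$ is distance on $\sph^1$ from the chosen direction. But two load-bearing steps fail as you state them.

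\emph{Strictness is not a fallback.} With respect to $g$ the model slices $\sph^2\times\ell$ have no curvature properties, so they cannot serve as barriers. The actual barrier is mean convexity of $\partial M$ on the sets $B_\pm$ where the angle is $0$ or $\pi$. That maximum principle argument needs two things. First, $B_\pm$ must have interior; this is obtained by composing with $q\colon\sph^1\to\sph^1$ constant near the poles with $\|dq\|\le 1+\varepsilon$, which uses up slack. Second, $H>0$ must hold on $B_\pm$, whereas there your hypothesis only gives $H\ge 0$. The paper therefore first passes to $g_\varepsilon=e^{2\varepsilon\phi}g$ with $\partial_\nu\phi\ge c>0$. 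This gives $H(g_\varepsilon)\ge|d(G\circ f)|_{g_\varepsilon}+a_2\varepsilon$ and $\Sc(g_\varepsilon)\ge\sigma-a_1\varepsilon$; the paper proves the strict version and then lets $\varepsilon\to0$. The margin $\delta>0$ is carried into the Robin condition $\partial_\nu u=(-H_{\partial\Sigma}+\delta)u$. Hence $\partial(\Sigma\times\sph^1)$ has mean curvature exactly $\delta>0$ in $g|_\Sigma+u^2dt^2$, not merely $\ge 0$ as in your Step 2. This strictness is what the next step needs.

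\emph{Your Step 3 sets up the wrong variational problem.} The $2$-systoles only see closed surfaces, i.e.\ classes in $\pi_2(M)$ or $H_2(M;\mathbb Z)$. A weighted capillary minimizer $S$ has free boundary on $\partial M$, so it defines at most a relative class. Gauss--Bonnet on $S$ would also produce $\int_{\partial S}k_g$, plus boundary terms from integrating $\Delta u/u$ and $\Delta v/v$ by parts, none of which you control. ``Choosing the slice to be interior-dominated'' does not supply a mechanism.

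The paper instead minimizes area among $\sph^1$-invariant Caccioppoli sets in $(\Sigma\times\sph^1,g|_\Sigma+u^2dt^2)$ with trace $1$ on $\partial_0\Sigma\times\sph^1$ and $0$ on $\partial_1\Sigma\times\sph^1$. Here $\partial_j\Sigma$ are the parts of $\partial\Sigma$ mapped to the two arcs of $\sph^1$ minus the poles. The strict boundary mean curvature $\delta$ keeps the minimizer in the interior. So it is $Z\times\sph^1$ with $Z\subset\mathring\Sigma$ closed and $[Z]=[\partial_0\Sigma]$.

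The topological input is also more than ``$\Sigma$ separates''. One needs $[\partial_0\Sigma]\neq0$ in $H_2(M;\mathbb Z)$. This holds because $\Omega\cap\partial M$, cut along the preimage of an arc, is a cobordism in $\partial M$ from $\partial_0\Sigma$ to a regular fiber of the boundary map, and that fiber is Poincar\'e dual to the generator.

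Only on this closed $Z$ does the argument close. One warps a second time with $v$ and integrates the doubly warped scalar curvature, using $|\nabla\log u|^2+|\nabla\log v|^2+\langle\nabla\log u,\nabla\log v\rangle\ge0$. This yields $(\sigma+\tfrac34\delta)\operatorname{Area}(Z)\le4\pi\chi(Z)$, hence $Z\cong\sph^2$ and the bound.
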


	This paper is organized as follows. In Section \ref{sec:reduce}, we reduce Theorem \ref{thm:sysPolygon} and Theorem \ref{thm:sysSmooth} to a weaker $2$-systole inequality (Theorem \ref{thm:sysStrict}), where the comparisons of scalar curvature and mean curvature are assumed to be strict. Then we prove Theorem \ref{thm:sysStrict}  in Section \ref{sec:capillary} via a dimension reduction argument in the sense of Schoen-Yau, by first finding a capillary minimal hypersurface, and then a minimal surface inside that capillary minimal hypersurface to realize the systolic inequality.
	
	\section{Reduction of Theorems \ref{thm:sysPolygon} and \ref{thm:sysSmooth}  to a strict comparison setting}\label{sec:reduce}

    We begin with the following weaker form of the $2$-systolic inequality, in
which the scalar-curvature and mean-curvature inequalities are assumed to be strict. Its proof will be given in
Section~\ref{sec:capillary}.

\begin{theorem}\label{thm:sysStrict}
	Let $M^4=\sph^2\times\disk$, and let $g$ be a smooth Riemannian metric
	on $M$. Let $\sph^1$ denote the standard unit circle, and let
	$f:\partial M\to\sph^1$ be a smooth map such that
	$f^*([d\theta])$ represents a generator of
	$H^1(\partial M;\mathbb Z)$ in the $\sph^1$-direction. If
	there exist constants $\sigma>0$ and $\delta>0$ such that
	\[
		H(g)\geq |df|+\delta,
		\qquad
		\Sc(g)\geq \sigma+\delta>0,
	\]
    then 
	\[
		\sys_2(g)
		\leq \frac{8\pi}{\sigma}.
	\]
\end{theorem}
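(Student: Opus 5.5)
We argue by contradiction and follow the Schoen--Yau dimension reduction, adapted to boundary in the spirit of Wang--Wang--Zhu. Suppose $\sys_2(g)>8\pi/\sigma$.

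\textbf{Step 1: a capillary hypersurface.} The map $f\colon\partial M\to\sph^1$ lets us treat $\partial M\cong\sph^2\times\sph^1$ as carrying a prescribed angle function. We minimize the capillary-type functional
\[
\mathcal A(\Omega)=\mathcal H^3(\partial\Omega\cap\interior{M})-\int_{\partial\Omega\cap\partial M}\cos\theta_f\,d\mathcal H^3
\]
over relative cycles $\Sigma^3$ whose boundary in $\partial M$ is homologous to a fiber $f^{-1}(\mathrm{pt})\cong\sph^2$. Here $\theta_f$ is chosen so that the boundary term is controlled by $|df|$; concretely, we use a $\mu$-bubble/capillary weight built from $f$, as in Wang--Wang--Zhu's treatment of balls. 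The strict inequality $H(g)\ge|df|+\delta$ is exactly the barrier condition. It prevents minimizing sequences from sliding into $\partial M$ and gives existence of a smooth minimizer $\Sigma$ in dimension $4$ (regularity holds up to dimension $7$). Topologically, $\Sigma$ still carries the $\sph^2$ class: it meets $\partial M$ in a surface homologous to $\sph^2\times\{\mathrm{pt}\}$, so $H_2(\Sigma)\to H_2(M)$ hits a generator.

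\textbf{Step 2: the second variation.} We plug a test function $\varphi>0$ into the stability inequality for the capillary minimizer and use the Gauss equation and Schoen--Yau rearrangement. The interior term becomes $\tfrac12(\Sc(g)-\Sc_\Sigma+|A|^2)$. The boundary terms combine $H(g)$, the second fundamental form of $\partial\Sigma$ in $\partial M$, and the derivative of the angle, and the latter is bounded by $|df|$. The strict inequality $H\ge |df|+\delta$ makes the boundary term nonnegative. The result is a positive first eigenfunction $u$ of
\[
-\Delta_\Sigma u+\tfrac12(\Sc_\Sigma-\sigma)u\ge0
\]
with boundary condition $\partial_\nu u\ge \kappa u$, where $\kappa$ is related to the geodesic curvature along $\partial\Sigma$.

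\textbf{Step 3: a warped minimal surface.} On $(\Sigma,g|_\Sigma)$ we use $u$ as a warping factor and minimize the weighted area $\int_S u\,dA$ among surfaces $S\subset\Sigma$ representing the nontrivial $\sph^2$ class. We allow free boundary on $\partial\Sigma$, or take closed $S$ if the homology class can be pushed off the boundary; the boundary condition from Step 2 is used to rule out boundary contact. Stability of the $u$-weighted minimal sphere $S$, combined with the inequality for $u$ and the Gauss equation, gives
\[
\int_S \tfrac{\sigma}{2}\,\psi^2\le \int_S |\nabla\psi|^2+K_S\psi^2
\]
in the sense of Fischer-Colbrie--Schoen/Zhu. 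Taking $\psi=1$ and applying Gauss--Bonnet $\int_S K_S=4\pi$ yields $\tfrac{\sigma}{2}\mathrm{Area}(S)\le 4\pi$. Hence $\mathrm{Area}(S)\le 8\pi/\sigma$, with the $\delta$ slack giving strictness. Since $S$ is a sphere that is nontrivial in $\pi_2(M)$ and $H_2(M)$, this contradicts $\sys_2(g)>8\pi/\sigma$ for both systoles.

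\textbf{Main obstacle.} The hardest step is Step 1: setting up the correct capillary functional so that $|df|$, rather than a constant angle, appears in the boundary second variation. This is also where the barrier and existence argument must use $H\ge|df|+\delta$ pointwise. I would copy the Wang--Wang--Zhu construction, with angle $\theta$ given by $\cos\theta$ equal to a component built from $f$. A secondary difficulty is keeping track of the topology in Steps 1 and 3, so that the final sphere is essential.
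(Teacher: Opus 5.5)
Your architecture matches the paper's. You use a Wang--Wang--Zhu capillary minimizer $Y$. The Gauss equation together with $|\langle\vec n,\nabla(\Psi\circ f)\rangle|\le|df|$ produces a positive Robin eigenfunction $u$ with $H_{\partial Y}+\partial_\nu\log u=\delta>0$. You then take an $\sph^1$-invariant (equivalently $u$-weighted) stable minimal surface and apply Gauss--Bonnet. Your Step 3 tests the weighted stability with $\varphi=u^{-1/2}$, which is what your $\psi\equiv1$ amounts to; this is an equivalent substitute for the paper's second warping by $v$ followed by integration of the doubly warped scalar curvature over $Z$.

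The gap is in Step 1, precisely the part you defer to Wang--Wang--Zhu, and it is more than vagueness: the competitor class you describe is empty. For $\partial M=\sph^2\times\sph^1\subset\sph^2\times\disk$ the map $H_2(\partial M)\to H_2(M)$ is an isomorphism. Since $\partial\Sigma$ bounds $\Sigma$ in $M$, the boundary of any hypersurface in $M$ is therefore null-homologous in $\partial M$. So your justification that $\Sigma$ ``meets $\partial M$ in a surface homologous to $\sph^2\times\{pt\}$'' is false. Once that impossible constraint is dropped, nothing in your setup forces the minimizer of $\mathcal A$ to be nontrivial or to carry the $\sph^2$ class.

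The missing mechanism is a two-sided pinning. Precompose $f$ with a degree-one self-map of $\sph^1$ that is constant near two antipodal points $\pm$ and has $|dq|\le1+\varepsilon$. Then $B_\pm=f^{-1}(\pm)$ have interior, and the loss is absorbed by $\delta$. Take the angle to be $\Psi\circ f$, with $\Psi$ the distance to $+$, and minimize $\mathcal A$ over Caccioppoli sets $\Omega\supset B_-$ whose free boundary avoids $B_\pm$. This pinning does three things:
\begin{enumerate}
\item it forces $Y\neq\emptyset$;
\item it keeps $0<J<\pi$ on $\partial Y$, which you need for the $1/\sin J$ boundary terms in the stability inequality;
\item it splits $\partial Y=\partial_0Y\sqcup\partial_1Y$ according to the two arcs $I_0,I_1$ of $\sph^1\setminus\{\pm\}$.
\end{enumerate}

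The essential class is carried by $\partial_0Y$ alone, not by $\partial Y$. Take an arc $\gamma\subset I_0$ between a regular value near $-$ and one near $+$. Then $(\Omega\cap\partial M)\cap f^{-1}(\gamma)$ is a cobordism from $\partial_0Y$ to a regular fiber, so $[\partial_0Y]$ generates $H_2(M)$. Consistently with the above, $[\partial_1Y]=-[\partial_0Y]$.

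This splitting is also what makes your Step 3 well posed. First pass to a component of $Y$ with $[\partial_0Y]\neq0$. Then minimize the $u$-weighted area among surfaces in $Y$ separating $\partial_0Y$ from $\partial_1Y$, and keep a component $Z$ of the minimizer with $[Z]\neq0$.

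Two smaller points:
\begin{enumerate}
\item Do not assume $S$ is a sphere; deduce $\chi(S)=2$ from $\tfrac{\sigma}{2}\operatorname{Area}(S)\le2\pi\chi(S)$.
\item The substitution $\varphi=u^{-1/2}\psi$ in the weighted stability leaves the extra term $\psi\langle\nabla\psi,\nabla\log u\rangle-\tfrac34\psi^2|\nabla\log u|^2$. So your inequality with coefficient $1$ on $|\nabla\psi|^2$ is not what you get for general $\psi$. This is harmless, since you only use $\psi\equiv1$.
\end{enumerate}
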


In this section, we deduce Theorems~\ref{thm:sysSmooth}
and~\ref{thm:sysPolygon} from Theorem~\ref{thm:sysStrict}.

\subsection{Reduction from a convex domain to the standard disk}

In this subsection, we show that Theorem~\ref{thm:sysStrict} implies
Theorem~\ref{thm:sysSmooth}. Let $(M,g) = (\mathbb S^2\times D, g)$ be as in Theorem \ref{thm:sysSmooth}, and let \(\nu\) be the outward unit normal along \(\partial M\). Denote the interior of $M$ by $\mathring M$. 

Since $\partial M$ is smooth, there exists a smooth positive function $\phi$ on $M$ such that \[
	\partial_\nu\phi\geq c>0\quad\text{on }\partial M
	\] 
    for some constant $c>0$.
 For $\varepsilon>0$, define
\[
	g_\varepsilon=e^{2\varepsilon\phi}g.
\]

In dimension four, scalar curvature and mean curvature transform
under this conformal change according to the formulas:
\begin{align}
	\Sc(g_\varepsilon)
	&=e^{-2\varepsilon\phi}
	\left(
		\Sc(g)-6\varepsilon\Delta_g\phi
		-6\varepsilon^2|\nabla^g\phi|^2
	\right),
	\label{eq:conformal-scalar}\\
	H(g_\varepsilon)
	&=e^{-\varepsilon\phi}
	\left(
		H(g)+3\varepsilon\partial_\nu\phi
	\right).
	\label{eq:conformal-mean}
\end{align}

It follows from the above formulas that  there exist constants $a_1>0$ and $a_2>0$ such that, for all sufficiently
small $\varepsilon>0$,
\begin{equation}\label{eq:smooth-perturbation}
	\Sc(g_\varepsilon)\geq \sigma-a_1\varepsilon
	\textup{ and }
	H(g_\varepsilon)
	\geq |df|_{g_\varepsilon}\cdot H(h)+a_2\varepsilon.
\end{equation}

Recall that $D\subset\mathbb R^2$ is a smooth convex domain equipped with
the Euclidean metric. Let
\[
	G:(\partial D,g_{eu}|_{\partial D})
	\longrightarrow(\sph^1,g_{\sph^1}),
	\qquad
	G(p)=\widetilde \nu(p),
\]
be the Gauss map of the convex plane curve $\partial D$, where $\widetilde \nu$
denotes its outward unit normal. The differential of $G$
satisfies
\[
	|dG|=H(h),
\]
and $G$ has degree one. It follows that
\[
	\widetilde f:=G\circ f:\partial M\longrightarrow\sph^1
\]
also represents a generator of $H^1(\partial M;\mathbb Z)$ in the
$\sph^1$-direction. Moreover,
\[
	|d\widetilde f|_{g_\varepsilon}
	\leq |df|_{g_\varepsilon}\cdot H(h).
\]
It follows from \eqref{eq:smooth-perturbation} that 
\[
	H(g_\varepsilon)
	\geq |d\widetilde f|_{g_\varepsilon}+a_2\varepsilon.
\]

Let us write 
\[
	\delta=a_2\varepsilon
	\textup{ and }
	\widetilde{\sigma}
	=\sigma-(a_1+a_2)\varepsilon.
\]
For all sufficiently small $\varepsilon>0$, we have
$\widetilde{\sigma}>0$, and
\[
	\Sc(g_\varepsilon)
	\geq \widetilde{\sigma}+\delta,
	\qquad
	H(g_\varepsilon)
	\geq |d\widetilde f|_{g_\varepsilon}+\delta.
\]
Applying Theorem~\ref{thm:sysStrict} to $(M,g_\varepsilon)$ and
$\widetilde f$, we obtain
\[
	\operatorname{sys}_2(g_\varepsilon)
	\leq
	\frac{8\pi}{\widetilde{\sigma}}
	=
	\frac{8\pi}{\sigma-(a_1+a_2)\varepsilon}.
\]

Finally, since $g_\varepsilon \to g$ uniformly, we have $\sys_2(g_\varepsilon) \to \sys_2(g)$ as $\varepsilon \to 0$.
Letting $\varepsilon\to0$ therefore gives
\[
	\operatorname{sys}_2(g)\leq\frac{8\pi}{\sigma},
\]
which proves Theorem~\ref{thm:sysSmooth}.

	\subsection{Reduction of  Theorem \ref{thm:sysPolygon} to Theorem \ref{thm:sysStrict}}
	
	In this subsection, we show that Theorem \ref{thm:sysStrict} implies Theorem \ref{thm:sysPolygon}.

   \subsubsection{First step: conformal change} Let $(M,g)$ be as in Theorem~\ref{thm:sysPolygon}. Observe that there exists a smooth vector field $\normal$ on a neighborhood
$U$ of $\partial M$ that points strictly outward across every codimension one face of $M$. This for example can be seen as follows.    On the convex polygon $(P, g_{eu})$ with Euclidean metric, choose a point $p$ in the interior of $P$ and consider the vector field 
      \[ v(x) = \frac{x-p}{\|x-p\|}\]
where $x$ lies in a small tubular neighborhood of $\partial P$. This naturally induces a corresponding vector field on $\sph^2\times P$ by setting $\normal(y, x) = v(x)$ for $(y, x)$ in a small tubular neighborhood of $\partial M= \sph^2\times \partial P$. Since by assumption $\theta_i(g) \leq \theta_i(h)$, the normal cone of every codimension two face in $M$ is fiberwise convex. Therefore, there is a diffeomorphism from $(M, h)$ to $(M, g)$ that maps the interior to interior and each face to its corresponding face. In particular, the image of the vector field $\normal$ under this diffeomorphism is a smooth vector field defined on a tubular neighborhood of $\partial M$ in $(M, g) $ that points strictly outward across every codimension one face of $(M, g)$.

Consider a smooth increasing positive function along the flow lines of $\normal$ in a small tubular neighborhood of $\partial M$ and then extend it to the entire $M$. We obtain a smooth function $\psi$ on $M$ and a constant $c>0$
such that 
\[
	\partial_{\nu}\psi\geq c
	\textup{ on every codimension one face $F$ of $M$}, 
\]
where $\nu$ denotes the outward unit normal to $F$. 

For $\varepsilon>0$, set
\[
	g_{\varepsilon}=e^{2\varepsilon\psi}g.
\]
Since $H_F(g)\geq 0$, it follows from \eqref{eq:conformal-scalar} and \eqref{eq:conformal-mean} that  there exist constants $a_1>0$ and $a_2>0$ such that, for all sufficiently small $\varepsilon>0$,
\begin{equation}
	\Sc(g_\varepsilon)\geq \sigma-a_1\varepsilon
	\textup{ and }
	H_F(g_\varepsilon)
	\geq a_2\varepsilon.
\end{equation}
Note that conformal changes preserve dihedral angles.  Since $\psi>0$, we have
$g_{\varepsilon}\geq g$ and hence 
\[
	\sys_2(g_{\varepsilon})
	\geq \sys_2(g).
\]
To summarize, we have 
\begin{itemize}
	\item $H_F(g_{\varepsilon})\geq a_2\varepsilon$ on every face $F$;
	\item $\Sc(g_{\varepsilon})\geq\sigma-a_1 \varepsilon>0$;
	\item $\theta_i(g_{\varepsilon})=\theta_i(g)\leq\theta_i(h)$ for every $i$; 
	\item and $\sys_2(g_{\varepsilon})\geq\sys_2(g)$.
\end{itemize}

\subsubsection{Second step: smoothing out codimension two corners} Our next step is to  smooth out the codimension-two strata while maintaining quantitative
control of the mean curvature. Denote the codimension-one faces of $(M,g_\varepsilon)$ by
$F_1,\ldots,F_N$ so that 
$F_{N+1}=F_1$. We denote 
\[
	E_i=F_i\cap F_{i+1}.
\]
Topologically, we have 
\[
	F_i\cong\sph^2\times[0,1]
	\textup{ and }
	E_i\cong\sph^2.
\]
The two normal vector fields to $E_i$ within $F_i$ and $F_{i+1}$ give a bundle framing of the normal bundle $\nbundle E_i$ of $E_i$ in $M$. So we have the bundle identification
\[
	\nbundle E_i\cong E_i\times\mathbb R^2
\]
and we denote the bundle projection by $\pi_i\colon \nbundle E_i\to E_i$.

Note that we have a splitting
\[
	T_{(x,v)}\nbundle E_i\cong T_xE_i\oplus \nbundle_xE_i.
\]
We equip $\nbundle E_i$ with the Riemannian metric $g_{\nbundle}$ defined by
\[
	(g_{\nbundle})_{(x,v)}
	\bigl((u_1,w_1),(u_2,w_2)\bigr)
	=
	g_\varepsilon (u_1,u_2)+g_\varepsilon (w_1,w_2).
\]
In particular, $g_{\nbundle}$ coincides with $g_\varepsilon$ along the zero section of $\nbundle E_i$ under the
natural identification
\begin{equation}\label{eq:bundleidentify}
    T_{(x, 0)}\nbundle E_i\cong T_xE_i\oplus \nbundle_xE_i=T_xM.
\end{equation}

Let
\[
	\theta_i(x)=\theta_i(g_\varepsilon)(x) \textup{ and }
	\beta_i(x)=\pi-\theta_i(x)
\]
be the dihedral-angle and turning-angle functions along $E_i$,
respectively. We also write 
\[
	\beta_i^0=\pi-\theta_i(h).
\]
Since $\theta_i(g_\varepsilon)\leq\theta_i(h)$, we have
\[
	\beta_i(x)\geq\beta_i^0.
\]
By compactness, there exist constants
$\beta_-,\beta_+\in(0,\pi)$ such that
\[
	\beta_i(x)\in[\beta_-,\beta_+]
\]
for every $i$ and every $x\in E_i$.

Using fiberwise polar coordinates $(r,\vartheta)$ on $\nbundle E_i$, consider
the region
\[
	V_i
	=
	\left\{
		(x,r,\vartheta) \in \nbundle E_i:
		x\in E_i,\ 
		0\leq r\leq r_0,\ 
		0\leq\vartheta\leq\beta_i(x)
	\right\},
\]
where $r_0>0$ is a fixed sufficiently small constant. Denote its codimension one 
faces by
\[
	G_0=\{\vartheta=0\}
	\textup{ and }
	G_1=\{\vartheta=\beta_i(x)\}.
\]
On these faces, define
\[
	T_i=-\partial_r
	\quad\text{on }G_0,
	\qquad
	T_{i+1}=\partial_r
	\quad\text{on }G_1.
\]

For each $i$, choose a smooth diffeomorphism
\[
	\Psi_i:F_i\longrightarrow\sph^2\times[0,1]
\]
such that
\[
	\Psi_i(E_{i-1})=\sph^2\times\{0\}
	\textup{ and }
	\Psi_i(E_i)=\sph^2\times\{1\}.
\]
We denote the parameter for $[0, 1]$ by $t$ and define
\[
	Y_i=(d\Psi_i)^{-1}(\partial_t).
\]
Moreover, we may assume without loss of generality that  $Y_i$ has unit length with respect to
$g_\varepsilon$ in small tubular neighborhoods
of $E_{i-1}$ and $E_i$ in $F_i$,  and is orthogonal to $E_{i-1}$ and $E_i$.

By shrinking $r_0$ if necessary, there exists  a smooth diffeomorphism
\[
	\Phi_i:V_i\longrightarrow U_i,
\]
where $U_i$ is a neighborhood of $E_i$ in $M$, such that
\begin{itemize}
    \item $\Phi_i(G_0)=U_i\cap F_i$ and 
	$\Phi_i(G_1)=U_i\cap F_{i+1},$
    \item $d\Phi_i(T_i)=Y_i$ on $G_0$ and $d\Phi_i(T_{i+1})=Y_{i+1}$  on $
	G_1.$ 
\end{itemize}
Moreover, we have 
\[
	\Phi_i|_{E_i}=\id,
	\textup{ and }
	d\Phi_i|_{E_i}=\id,
\]
where the latter identity should be understood via the identification \eqref{eq:bundleidentify}.

	 In order to smooth out codimension two corners of $M$, we first construct, in each normal bundle $\nbundle E_i$, a smooth hypersurface interpolating between $G_0$ and $G_1$. Then its image under the map $\Phi_i$ will give a smooth transition from  $F_i$ to $F_{i+1}$. The constants and estimates below may depend on the maps $\Phi_i$ and $\Psi_i$ and on the geometry of $(M,g_\varepsilon)$, but they are independent of the parameter $\rho$ introduced below. 
	
	Choose a nonnegative function 
	$
	\chi\in C_c^\infty(\mathbb R)$ such that 
	$$
	\operatorname{supp}\chi=[-1,1],~
	\chi>0\textup{ on }(-1,1),\textup{ and }
	\int_{\mathbb R}\chi(s)\,ds=1,$$
	 and define $$A(s):=\int_{-1}^s\chi(t)\,dt \textup{ for } s\in \mathbb R.$$ 
     Note that  $A(s)=0$ for $s\leq-1$ and $A(s)=1$ for $s\geq1$. For each $\beta>0$, consider the vector-valued function $T_\beta\colon \R \to \mathbb C =  \R^2$ given by 
     \[ T_\beta(s)=e^{i(\pi - \beta A(s))}, \] 
     and the following curve in $\R^2$:
     \[ \tilde\gamma_\beta(s)=
         \int_{-1}^s T_\beta(t) dt. \]
     By construction, $\tilde\gamma_\beta$ is a straight ray for $s\leq -1$ and $s\geq 1$ respectively. By extending these two straight rays to straight lines, they intersect at angle $\beta$. Translating the curve $\tilde\gamma_\beta$ so that the intersection point of these two straight lines is the origin, and denote the resulting curve  by 
     $\gamma_\beta$. See Figure \ref{fig:gamma-beta}. The curve $\gamma_\beta$ is parametrized by arc length and has curvature $\beta\cdot \chi(s)$ for $s\in [-1, 1]$. Moreover,  $\gamma_\beta$ depends smoothly on both $s$ and $\beta\in[\beta_-,\beta_+]$.
\begin{figure}[ht]
  \centering
  \begin{tikzpicture}[x=1.45cm,y=1.45cm,>=Stealth]
    \coordinate (O)  at (0,0);
    \coordinate (P-) at (1.10,0);
    \coordinate (P+) at (-0.69,0.98);

    \draw[densely dashed,gray]
      (P-) -- (-0.72,0)
      (O) -- (P+);
    \draw[gray,->]
      (-0.52,0) arc[start angle=180,end angle=125,radius=0.52];
    \node at (-0.64,0.27) {$\beta$};
    \fill (O) circle (0.7pt);
    \node[below left=-1pt] at (O) {$0$};

    \draw[line width=1.05pt,-{Stealth[length=2.2mm]}]
      (3.05,0) -- (P-)
      .. controls (0.43,0) and (-0.31,0.44) .. (P+)
      -- (-1.75,2.49);

    \fill (P-) circle (1.05pt);
    \fill (P+) circle (1.05pt);
    \node[below=3pt] at (P-) {$s=-1$};
    \node[left=3pt] at (P+) {$s=1$};
    \node[above=4pt] at (2.15,0)
      {$T_\beta(-1)=-1$};
    \node[left=5pt] at (-1.25,1.78)
      {$T_\beta(1)=e^{i(\pi-\beta)}$};
  \end{tikzpicture}
  \caption{The curve $\gamma_\beta$}
  \label{fig:gamma-beta}
\end{figure}
	
	With the chosen trivialization of $\nbundle E_i$ above, we regard $\gamma_{\beta_i(x)}$ as a curve in the fiber $\nbundle_xE_i$, with its two straight ends tangent to $G_0$ and $G_1$. 
    Define 
    \[ f_i \colon E_i\times[-1,1]\longrightarrow\nbundle E_i \textup{ by } f_i(x,s) = \bigl(x,\gamma_{\beta_i(x)}(s)\bigr), \] and let 
    \begin{equation}\label{eq:smooth-hypersurface}
        \overline\Sigma_i = f_i\bigl(E_i\times[-1,1]\bigr).
    \end{equation}
     Then $\overline\Sigma_i$ is a smooth hypersurface in $\nbundle E_i$ that meets $G_0$ and $G_1$ tangentially along $s=-1$ and $s=1$, respectively. Also, $\overbar\Sigma_{i}$ is contained in the region $\{(x,v)\in \nbundle E_i:|v|\leq L\}$ for some $L>0$. 
	
	Define $\overbar\Sigma_{i,\rho}\coloneqq \rho\cdot\overbar \Sigma_i$ for $\rho>0$, i.e. each curve $\gamma_{\beta_i(x)}\subset \nbundle_xE_i$ is rescaled by a factor $\rho$. Then $\overbar\Sigma_{i,\rho}$ is contained in  $\{(x,v)\in \nbundle E_i:|v|\leq L\rho\}$. Moreover,  the fiberwise  curvature of $\overbar\Sigma_{i,\rho}$ is $\rho^{-1}\beta_i(x)\chi(s)$. More precisely, let $\sff_{g_{\nbundle}}(X,Y)=g_{\nbundle}(\nabla^{g_{\nbundle}}_X\nu_{i,\rho} ,Y)$ be the second fundamental form of $\overbar\Sigma_{i,\rho}$, where $\nu_{i,\rho}$ is the outward unit normal vector of $\overbar\Sigma_{i,\rho}$. Then 
    \[ \sff_{g_{\nbundle}}(\partial_{s,\rho},\partial_{s,\rho})=\rho^{-1}\beta_i(x)\chi(s),\]
    where $\partial_{s,\rho}$ represents the unit-length direction parallel to $\partial_s$.
    
    As $\rho\to 0$, we have $\nu_{i,\rho}=n_{i,\rho}+O(\rho)$, where $n_{i,\rho}$ is the outward unit normal vector to the curve $\gamma_{\beta_i(x)}$ in the fiber $\nbundle_x E_i$. Consequently, \[ \sff_{g_{\nbundle}} (\partial_{s,\rho},\partial_{s,\rho}) = \rho^{-1}\beta_i(x)\chi(s)+O(1), \] while all remaining components of the second fundamental form, with respect to an orthonormal frame, are uniformly bounded. Therefore, whenever $L\rho<r_0$, there is a uniform constant $C>0$, independent of $\rho$,  such that the mean curvature of $\overbar\Sigma_{i,\rho}$ satisfies
	\begin{equation}
		|H_{\overbar\Sigma_{i,\rho}}(g_{\nbundle})(x, s)-\rho^{-1}\beta_i(x)\chi(s)|\leq C.
	\end{equation}
	Here and below, in order to avoid introducing too many constants, we let $C$ denote a positive constant that may represent different values for different estimates and  may depend on the geometry of $M$ and on the maps $\Phi_i$ and $\Psi_i$, but is always independent of $\rho$.
    
We now define $\Sigma_{i,\rho}=\Phi_i(\overbar\Sigma_{i,\rho})$. When $L\rho<r_0$, the hypersurface $\Sigma_{i,\rho}$ is contained in the $L'\rho$-neighborhood of $E_i$ for some $L'>0$. Equivalently, we consider $\overbar\Sigma_{i,\rho}$ inside $V_i\subset \nbundle E_i$ with respect to the metric $\Phi_i^*g_\varepsilon$. Denote by $\nu_{i,\rho}^{g_\varepsilon}$ its outward unit normal vector with respect to $\Phi_i^*g_\varepsilon$.  Since $\Phi_i^*g_\varepsilon$ agrees with
$g_{\nbundle}$ along the zero section $E_i$ and
$\overbar\Sigma_{i,\rho}$ is at distance $O(\rho)$ from $E_i$,
we have
\[
	\nu_{i,\rho}^{\,g_\varepsilon}
	=
	n_{i,\rho}+O(\rho).
\] 
Therefore, whenever $L\rho<r_0$, we have
	\begin{equation}\label{eq:mean-estimate}
		|H_{\overbar\Sigma_{i,\rho}}(\Phi_i^*g_\varepsilon)(x, s)-\rho^{-1}\beta_i(x)\chi(s)|\leq C.
	\end{equation}

By construction, $\Sigma_{i,\rho}$ gives a smooth transition from  $F_i$ to $F_{i+1}$. Define $\Sigma_\rho$ to be the union of all the hypersurfaces $\Sigma_{i,\rho}$ and parts of the faces $F_i$ that are \emph{not} outside of $\Sigma_{i,\rho}$;  this is a closed smooth three-dimensional manifold. Let $M_\rho$ be submanifold  with smooth boundary $\Sigma_\rho$ contained in $M$.

Let us now define a map
\begin{equation}\label{eq:f_rho}
    f_\rho\colon \Sigma_\rho\longrightarrow\sph^1
\end{equation}
as follows. Recall that 
\[
	\beta_i^0=\pi-\theta_i(h)
\]
is the turning angles of the convex Euclidean $(P, g_{eu})$. We have 
\[
	\sum_{i=1}^N\beta_i^0=2\pi.
\]
We denote 
\[
	\alpha_0=0
\textup{ and }	\alpha_i=\sum_{j=1}^i\beta_j^0 
\]
so that $\alpha_N=2\pi$. On $\Sigma_{i,\rho}$,  we define
\begin{equation}\label{eq:def-boundary-map}
	f_\rho\left(
		\Phi_i\bigl(x,\rho\gamma_{\beta_i(x)}(s)\bigr)
	\right)
	\coloneqq 
	\exp\left(
		i\bigl(\alpha_{i-1}+\beta_i^0A(s)\bigr)
	\right).
\end{equation}
On  $F_i\cap \Sigma_\rho$, we define
\[
	f_\rho\equiv e^{i\alpha_{i-1}}.
\]
Since $A$ is constant for $s\leq-1$ and $s\geq1$, the piecewise definitions above  fit
together  to give a smooth map. By construction,  $f_\rho$ has degree one in the $\sph^1$-direction.

On $\Sigma_{i,\rho}$, we have
	\begin{equation}
		df_\rho=i\beta^0_i\chi(s) f_\rho\cdot (\Phi_i^{-1})^\ast( ds),
	\end{equation}
	which is supported in the $L'\rho$-neighborhood of $E_i$. By construction, we have $(d\Phi_i^{-1})^\ast= I +O(\rho)$, where $I$ is the identity map,  and 
    \[ |ds|=\frac{1}{\rho}+O(1) \] as $\rho\to 0$. Hence there is a constant $C>0$ independent of $\rho$ such that 
	\begin{equation*}
		|df_\rho|\leq \frac{\beta_i^0}{\rho}\chi(s)+C
	\end{equation*}
	on $\Sigma_{i,\rho}$. Together with \eqref{eq:mean-estimate} and the fact that $\beta_i(x)\geq \beta^0_i$, this implies that
	\begin{equation*}
		H_{\Sigma_{i,\rho}}(g_\varepsilon)-|df_\rho|\geq a_2\varepsilon-C.
	\end{equation*}
On $F_i\cap \Sigma_\rho$, $f_\rho$ is constant. Hence  
\[ H_{\Sigma_{\rho}}(g_\varepsilon)-|df_\rho| = H_{F_i}(g_\varepsilon) \geq a_2\varepsilon \textup{ on } F_i\cap \Sigma_\rho.\]

Observe that there exists $C>0$ independent of $\rho$ such that each $\Sigma_{i, \rho}$ has volume 
\[
	\vol_{g_\varepsilon}(\Sigma_{i, \rho})\leq C\rho.
\]

It follows from the above discussion that there exist a nonnegative
smooth function $k_\rho$ on $\Sigma_\rho$ and constants $c_1,c_2>0$,
independent of $\rho$, such that
\begin{equation}\label{eq:k_rho}
\begin{split}
H_{\Sigma_\rho}(g_\varepsilon)-|df_\rho|
	&\geq a_2\varepsilon- 3k_\rho, \\
0\leq k_\rho&\leq c_1, \\
	\vol_{g_\varepsilon}(\supp k_\rho)
	&\leq c_2\rho,
    \end{split}
\end{equation}
where $\supp k_\rho$ is the support of $k_\rho$.
For example, one may take $k_\rho$ to be a smooth cutoff function equal to sufficiently large positive constant on a small $O(\rho)$-neighborhood of each 
$\Sigma_{i, \rho}$ in $\Sigma_\rho$. In particular, for every $p>0$,
\begin{equation}\label{eq:h-Lp-bound}
	\|k_\rho\|_{L^p(\Sigma_\rho)}
	\leq
	c_1(c_2\rho)^{1/p}
	\to 0
	\textup{ as }\rho\to0.
\end{equation}

Recall that $g_\varepsilon =e^{2\varepsilon\psi}g$ with
$\psi>0$. Therefore, we have 
\[
	\operatorname{sys}_2(M_\rho,g_\varepsilon)
	\geq
	\operatorname{sys}_2(M,g_\varepsilon)
	\geq
	\operatorname{sys}_2(M,g).
\]
To summarize, for  sufficiently small $\varepsilon>0$ and $\rho>0$, we have constructed a compact Riemannian manifold with smooth
boundary $(M_\rho,\Sigma_\rho,g_\varepsilon)$ and smooth maps
$f_\rho:\Sigma_\rho\to\sph^1$ such that 
\begin{enumerate}
	\item $\Sc(g_\varepsilon)\geq\sigma-a_1\varepsilon;$
	\item
		$H_{\Sigma_\rho}(g_\varepsilon)-|df_\rho|
		\geq a_2\varepsilon-3k_\rho,$
	where $k_\rho$ is a nonnegative smooth function on $\Sigma_\rho$ satisfying 
	\[
		0\leq k_\rho\leq c_1 \textup{ and }
		\vol_{g_\varepsilon}(\operatorname{supp}k_\rho)
		\leq c_2\rho;
	\]
	\item  and $ \sys_2(M_\rho,g_\varepsilon)
		\geq\sys_2(M,g)$
\end{enumerate}
where $a_1, a_2$ are positive constants independent of $\varepsilon$ and $\rho$, and , $c_1,c_2$ are independent of $\rho$.

Finally, by the construction of $\Sigma_\rho$, there is a bi-Lipschitz homeomorphism
\[ L_\rho\colon (M_\rho,g_\varepsilon)\to (M,g_\varepsilon) \]
whose bi-Lipshitz constants are uniformly bounded in $\rho$. For example, in the normal bundle $\nbundle E_i$,  fix a local bi-Lipschitz map that maps a small tubular neighborhood of $\overbar \Sigma_{i}$ from \eqref{eq:smooth-hypersurface} to a small tubular neighborhood of $(G_0\cup G_1)$ (see Figure \ref{fig:rounded-sector-map}). Fiberwise rescaling of this map keeps the bi-Lipschitz constants uniformly bounded. Using a partition of unity to glue these local bi-Lipschitz maps with the identity map away from these neighborhoods of $E_i$ gives the desired bi-Lipschitz homeomorphism $ L_\rho\colon (M_\rho,g_\varepsilon)\to (M,g_\varepsilon)$ above. 

Since $L_\rho\colon (M_\rho,g_\varepsilon)\to (M,g_\varepsilon)$ is bi-Lipschitz, the Sobolev $H^1$-spaces for the manifolds $M_\rho$ and $M$ can  be identified, and their Sobolev $H^1$-norms are equivalent.


\begin{figure}[ht]
\centering
\begin{tikzpicture}[scale=0.9,transform shape,>=Stealth]

  \begin{scope}
    \coordinate (O)  at (0,0);
    \coordinate (P-) at (1.10,0);
    \coordinate (P+) at (-0.69,0.98);
    \coordinate (Q-) at (3.15,0);
    \coordinate (Q+) at (-1.95,2.77);

    \fill[gray!15,draw=none]
      (P-)
      -- (Q-)
      arc[start angle=0,end angle=125,radius=3.15]
      -- (Q+)
      -- (P+)
      .. controls (-0.31,0.44) and (0.43,0) .. (P-)
      -- cycle;

    \draw[line width=1pt] (P-) -- (Q-);
    \draw[line width=1pt] (P+) -- (Q+);

    \draw[line width=1.2pt]
      (P+)
      .. controls (-0.31,0.44) and (0.43,0) .. (P-);

    \draw[densely dashed,gray] (O) -- (P-);
    \draw[densely dashed,gray] (O) -- (P+);

    \fill[gray] (O) circle (0.7pt);
    \node[below left=-1pt,gray] at (O) {$0$};

    \node at (0.95,1.55) {$M_\rho$};
    \node at (0.18,0.52) {$\Sigma_\rho$};

  \end{scope}

  \draw[->,line width=0.9pt] (4.35,1.2) -- (6.35,1.2);
  \node at (5.35,1.48) {$L_\rho$};

  \begin{scope}[shift={(8.7,0)}]

    \coordinate (O2)  at (0,0);
    \coordinate (Q2-) at (3.15,0);
    \coordinate (Q2+) at (-1.95,2.77);
    \coordinate (P2-) at (1.10,0);
    \coordinate (P2+) at (-0.69,0.98);

    \fill[gray!15,draw=none]
      (O2)
      -- (Q2-)
      arc[start angle=0,end angle=125,radius=3.15]
      -- (Q2+)
      -- cycle;

    \draw[line width=1pt] (O2)--(Q2-);
    \draw[line width=1pt] (O2)--(Q2+);


    \fill (O2) circle (0.8pt);
    \node[below left=-1pt] at (O2) {$0$};

    \node at (0.95,1.55) {$M$};
    \node at (0.82,0.45) {$L_\rho(\Sigma_\rho)$};

  \end{scope}

\end{tikzpicture}
\caption{The smooth boundary $\Sigma_\rho$
of $M_\rho$ is mapped by the Lipschitz map $L_\rho$ to the two boundary
faces at the corner.}
\label{fig:rounded-sector-map}
\end{figure}

\subsubsection{Step three: another conformal change to improve the mean curvature}
	
	Recall  that the mean curvature $\Sigma_\rho = \partial M_\rho$ satisfies: 
    \[ H_{\Sigma_\rho}(g_\varepsilon)-|df_\rho|
		\geq a_2\varepsilon-3k_\rho. \]
        Our next step is to perform another conformal change on $M_\rho$ to improve the above inequality so that $H_{\Sigma_\rho}(g_\varepsilon)-|df_\rho|$ becomes positive everywhere on $\Sigma_\rho$.
	
	\begin{lemma}\label{lemma:conformal}
		Let $(M_\rho,\partial M_\rho,g_\varepsilon)$ be  as above. Let $k_\rho$ be the function from \eqref{eq:k_rho}. Then there exists positive $\alpha_\rho=O(\rho^{1/3})$ such that the first eigenvalue of the following problem is non-negative:
		\begin{equation}\label{eq:laplace}
		    \begin{cases}
		-\Delta\psi + \alpha_\rho\psi_\rho=\lambda_\rho\psi_\rho\\
		\partial_{\nu_\rho}\psi_\rho=k_\rho\psi_\rho.
		\end{cases}
		\end{equation}
		Furthermore, as $\rho\to 0$, we have $\frac{\max\psi_\rho}{\min\psi_\rho}\to 1$.
	\end{lemma}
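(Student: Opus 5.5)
We argue variationally through the Rayleigh quotient attached to \eqref{eq:laplace}. For $u\in H^1(M_\rho)$ set
\[
Q_\rho(u)=\int_{M_\rho}|\nabla u|^2+\alpha_\rho\int_{M_\rho}u^2-\int_{\Sigma_\rho}k_\rho u^2 .
\]
The first eigenvalue is $\lambda_\rho=\inf Q_\rho(u)/\|u\|_{L^2}^2$. The infimum is attained because the boundary term is compact on $H^1$: the trace $H^1\to L^2(\Sigma_\rho)$ is compact. Standard arguments then give a first eigenfunction $\psi_\rho$ that is positive and smooth up to the boundary.

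\textbf{Step 1: the boundary term.} It suffices to choose $\alpha_\rho$ so that $Q_\rho(u)\ge0$ for all $u$. We estimate the boundary term by H\"older:
\[
\int_{\Sigma_\rho}k_\rho u^2\le \|k_\rho\|_{L^{3}(\Sigma_\rho)}\|u\|_{L^{3}(\Sigma_\rho)}^2 .
\]
In dimension four, the trace map $H^1(M)\to L^{3}(\partial M)$ is bounded. This is the critical trace exponent $2(n-1)/(n-2)=3$.

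We transfer this inequality to $M_\rho$ using the uniformly bi-Lipschitz maps $L_\rho$. The pullback metric $L_\rho^*g_\varepsilon$ is comparable to $g_\varepsilon$ with constants independent of $\rho$, so Sobolev and trace constants are controlled. The boundary $\Sigma_\rho$ is carried onto $\partial M$ with comparable area elements. Hence there is $C$, independent of $\rho$, with $\|u\|_{L^3(\Sigma_\rho)}^2\le C\|u\|_{H^1(M_\rho)}^2$.

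Combining this with \eqref{eq:h-Lp-bound} for $p=3$ gives
\[
\int_{\Sigma_\rho} k_\rho u^2\le C\rho^{1/3}\Bigl(\int|\nabla u|^2+\int u^2\Bigr).
\]
Take $\alpha_\rho=C\rho^{1/3}$. For $\rho$ small we have $C\rho^{1/3}<1$, and then
\[
Q_\rho(u)\ge (1-C\rho^{1/3})\int|\nabla u|^2\ge0 .
\]
Therefore $\lambda_\rho\ge0$.

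\textbf{Step 2: the oscillation bound.} Normalize $\|\psi_\rho\|_{L^2}=\vol(M_\rho)^{1/2}$. Testing with constants gives $\lambda_\rho\le \alpha_\rho-\int k_\rho/\vol\le \alpha_\rho\to0$. From $Q_\rho(\psi_\rho)=\lambda_\rho\|\psi_\rho\|^2$ and Step 1,
\[
(1-C\rho^{1/3})\|\nabla\psi_\rho\|_{L^2}^2\le \lambda_\rho\|\psi_\rho\|^2=O(\rho^{1/3}),
\]
so $\psi_\rho$ converges to a constant in $H^1$.

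To pass from $H^1$ to uniform convergence we need an $L^\infty$ bound. Apply Moser iteration to the equation $-\Delta\psi=(\lambda_\rho-\alpha_\rho)\psi$ with Robin condition $\partial_\nu\psi=k_\rho\psi$. The Robin coefficient is bounded in $L^\infty$ by $c_1$, and the Sobolev/trace constants are uniform by bi-Lipschitz invariance. This gives uniform $\sup$ and Harnack bounds, and we will also use the smallness of $k_\rho$ in $L^p$ for large $p$.

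We then apply Moser iteration to $w=\psi_\rho-\bar\psi_\rho$, where $\bar\psi_\rho$ is the mean. This gives
\[
\|w\|_\infty\le C\bigl(\|w\|_{L^2}+\|k_\rho\psi_\rho\|_{L^p(\Sigma_\rho)}+|\lambda_\rho-\alpha_\rho|\,\|\psi_\rho\|_{\infty}\bigr)\to0 .
\]
Since $\bar\psi_\rho\approx1$, this yields $\max\psi_\rho/\min\psi_\rho\to1$.

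\textbf{Main obstacle.} The expected difficulty is Step 2: obtaining uniform-in-$\rho$ $L^\infty$ (De Giorgi--Nash--Moser) estimates up to the boundary on the varying domains $M_\rho$. The curvature of $\Sigma_\rho$ blows up like $\rho^{-1}$, so Schauder theory is unavailable. The plan is to use only the divergence-form structure. We pull the problem back by $L_\rho$ to the fixed domain $M$ with a uniformly elliptic measurable coefficient metric. On $M$, Moser iteration needs only uniform Sobolev and trace inequalities and the bounds $0\le k_\rho\le c_1$ and $\|k_\rho\|_{L^p}\to0$.
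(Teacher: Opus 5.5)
Your proposal is correct and follows essentially the same route as the paper: a uniform $L^3$ trace inequality on $M_\rho$ obtained from the uniformly bi-Lipschitz maps $L_\rho$, then H\"older with $\|k_\rho\|_{L^3}=O(\rho^{1/3})$ to choose $\alpha_\rho$ so that $Q_\rho(u)\geq(1-\alpha_\rho)\int|\nabla u|^2\geq 0$, then testing with constants to get $\lambda_\rho\to0$ and hence $H^1$-convergence of $\psi_\rho$ to a constant, and finally Moser iteration with uniform Sobolev/trace constants to upgrade to uniform convergence. Your last step is somewhat more explicit than the paper's, which only asserts it: you run the iteration on $\psi_\rho-\bar\psi_\rho$ and combine a uniform $L^\infty$ bound with $\|k_\rho\|_{L^p}\to0$ for some $p>3$.
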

	\begin{proof}	
		Using the uniform Lipschitz maps $L_{\rho_0}^{-1}L_\rho\colon M_\rho\to M_{\rho_0}$, we see that the Sobolev $H^1$-norm on all $M_\rho$'s are uniformly equivalent, hence the Sobolev $H^1$-space for all $M_\rho$'s are equal. Furthermore, based on the standard Sobolev inequalities on $M_{\rho_0}$, we have the following trace inequality for any $\rho\leq \rho_0$ with uniform coefficient:
		\begin{equation}\label{eq:trace2}
			\|\varphi\|_{L^3(\partial M_\rho)}\leq c_0\|\varphi\|_{H^1(M_\rho)},~\forall \varphi\in H^1(M_\rho),
		\end{equation}
		and
		\begin{equation}\label{eq:trace1}
			\|\psi\|_{L^1(\partial M_\rho)}\leq c_1\|\psi\|_{W^{1,1}(M_\rho)},~\forall \psi\in W^{1,1}(M_\rho).
		\end{equation}

		Let $\alpha_\rho=c_0^2\|k_\rho\|_{L^3(\partial M_\rho)}=O(\rho^{1/3})$. Then by the H\"older inequality, we have
			\begin{equation}
	\begin{split}
				\int_{\partial M_{\rho}} |k_\rho|\varphi^2\leq& \| k_\rho\|_{L^{3}(\partial M_{\rho})}\|\varphi^2\|_{L^{3/2}(\partial M_{\rho})}=\|k_\rho\|_{L^{3}(\partial M_{\rho})}\|\varphi\|_{L^{3}(\partial M_{\rho})}^2\\
				\leq& c_0^2\|k_\rho\|_{L^{3}(\partial M_{\rho})}\|\varphi\|_{H^1(M_\rho)}^2
				=\alpha_\rho\left(\int_{M_\rho}|\nabla\varphi|^2+|\varphi|^2\right)
	\end{split}
		\end{equation}

		Let $\psi_\rho$ be an eigenfunction of the first eigenvalue of \eqref{eq:laplace}. Then
		\begin{align*}
			\lambda_\rho\int_{M_\rho}|\psi_\rho|^2&=\int_{M_\rho} (|\nabla\psi_\rho|^2+\alpha_\rho|\psi_\rho|^2)-\int_{\partial M_\rho}k_\rho\psi_\rho^2\\
			&\geq \int_{M_\rho} (1-\alpha_\rho)|\nabla\psi_\rho|^2\geq 0.
		\end{align*}
		Thus $\lambda_\rho\geq 0$.
		
		Without loss of generality, assume that $\|\psi_\rho\|_{L^2(M_\rho)}=1$. Consider the bilinear form
		\begin{equation}
			B_\rho(u,v)=\int_{M_\rho}\langle\nabla u,\nabla v\rangle-\int_{\partial M_\rho}k_\rho uv.
		\end{equation}
		Let $Q_\rho(u)=B_\rho(u,u)$. 
		The function $\psi_\rho$ minimizes $Q_\rho$ and satisfies $Q_\rho(\psi_\rho)=\lambda_\rho-\alpha_\rho$. Since
		$$|\lambda_\rho-\alpha_\rho|\leq |Q_\rho(\frac{1}{\sqrt{vol(M_\rho)}})|\leq \frac{1}{vol(M_\rho)}\int_{\partial M_\rho}k_\rho\leq \alpha_\rho,$$
		it follows that $\lambda_\rho\leq 2\alpha_\rho$. Therefore,
		\begin{equation}
			\int_{M_\rho}|\nabla\psi_\rho|^2\leq \frac{\lambda_\rho}{1-\alpha_\rho}\to 0.
		\end{equation}
		In particular, if we set $\overbar\psi_\rho\coloneqq (L_\rho^{-1})^*\psi_\rho\in H^1(M)$. then
		\begin{equation}
			\int_{M_\rho}|\nabla\overbar \psi_\rho|^2\to 0.
		\end{equation}

		It follows that $\{\overbar\psi_\rho\}$ is uniformly bounded in the Sobolev $H^1$-norm. We may assume that $\{\overbar\psi_\rho\}$ is a Cauchy sequence in the $L^2$-norm. Moreover, since $\nabla\overbar\psi_\rho\to 0$, $\overbar\psi_\rho$ is also a Cauchy sequence in the $H^1$-norm. In particular, $\overbar\psi_\rho$ converges to the constant function $vol(M)^{-1/2}$ in the $H^1$-norm.
		
		By the uniform trace inequality, uniform Sobolev inequality and the Moser iteration, we have that $\overbar\psi_\rho\to vol(M)^{-1/2}$ in the $L^\infty$-norm. Consequently, we have 
        \[ \frac{\max\psi_\rho}{\min\psi_\rho}\to 1. \]

	\end{proof}
	
	Let $\psi_\rho$ be the eigenfunction of the first eigenvalue of the elliptic equation \eqref{eq:laplace}. $\psi_\rho$ is nowhere vanishing, hence we may assume without loss of generality that $\psi_\rho$ is everywhere positive. Let us now consider the following conformal change of metric 
	\[
	g^\rho=\psi_\rho^2g_\varepsilon
	\]
	on $M_\rho$. With respect to the new metric $g^\rho$,  the map $f_\rho\colon (\partial M_\rho, g^\rho)\to (\sph^1,g_{\sph^1})$ satisfies 
\begin{equation} |df_\rho|_{g^\rho}=\psi_\rho^{-1}|df_\rho|_{g_\varepsilon}.
	\end{equation}
The  conformal change formulas, together with \eqref{eq:laplace} and \eqref{eq:k_rho},  imply 
	\begin{equation}
    \begin{split}
        \Sc(g^\rho)
		& =  \psi_\rho^{-2}
		\bigl(\Sc(g_\varepsilon)-6 \psi_\rho^{-1}\Delta \psi_\rho \bigr)\\
        & = \psi_\rho^{-2}
		\bigl(\Sc(g_\varepsilon)+6(\lambda_\rho-\alpha_\rho)\bigr) \geq\psi_\rho^{-2}\bigl(\Sc(g_\varepsilon)-6\alpha_\rho\bigr)
    \end{split}
	\end{equation}
and
	\begin{equation}
    \begin{split}
        H_{\Sigma_\rho}(g^\rho)&=\psi_\rho^{-1}(H_{\Sigma_\rho}(g_\varepsilon)+3\frac{\partial_{\nu_\rho}\psi_\rho}{\psi_\rho}) \\
		& \geq \psi_\rho^{-1}(|df_\rho|_{g_\varepsilon}+a_2\varepsilon)=|df_\rho|_{g^\rho}+\psi_\rho^{-1}a_2\varepsilon.
    \end{split}
	\end{equation}
	
	Finally, for a fixed $\varepsilon$,  we apply Theorem \ref{thm:sysStrict} to $(M_\rho, g^\rho)$. Since  $\frac{\max\psi_\rho}{\min\psi_\rho}\to 1$, $\psi_\rho$ can be chosen to approach $1$ uniformly, as $\rho\to 0$. Letting  $\rho\to0 $ and  then followed by  $\varepsilon \to0$, we see that Theorem \ref{thm:sysStrict} implies Theorem \ref{thm:sysPolygon}.
	
	\section{Capillary minimal hypersurfaces}\label{sec:capillary}
	
	In this section, we prove Theorem \ref{thm:sysStrict}.
	
	\subsection{First step: capillary minimal hypersurface}
	
	Let 
	$f \colon \partial M\to\sph^1$ be the map given in  Theorem \ref{thm:sysStrict}. We first perform a minor modification of $f$.
    
	\begin{lemma}\label{lemma:poles-open}
		Let $\pm$ be the two poles of $\sph^1$.
		After replacing \(f\) by a homotopic smooth map and decreasing the 
		constant $\delta$ in Theorem \ref{thm:sysStrict},  we may assume that the inverse images \(B_-=f^{-1}(-)\) and
		\(B_+=f^{-1}(+)\) of these antipodal points contain open sets.
	\end{lemma}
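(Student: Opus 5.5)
We will precompose $f$ with a map $\eta\colon\sph^1\to\sph^1$ that collapses small arcs around the two poles. Write $\sph^1=\R/2\pi\Z$ with the poles $\pm$ at angles $0$ and $\pi$. For small $\tau>0$, let $\eta_\tau\colon\sph^1\to\sph^1$ be a smooth degree-one map with the following properties:
\begin{itemize}
\item $\eta_\tau$ sends the arc $[-\tau,\tau]$ to $0$ and the arc $[\pi-\tau,\pi+\tau]$ to $\pi$;
\item $\eta_\tau$ is monotone, and its derivative satisfies $0\le\eta_\tau'\le\frac{\pi}{\pi-2\tau}+\tau$.
\end{itemize}
Such a map is built by integrating a smooth nonnegative function $\eta'_\tau$ that vanishes on the two collapsed arcs. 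On each of the two complementary arcs, of length $\pi-2\tau$, we require the integral of $\eta'_\tau$ to be $\pi$, and we choose $\eta'_\tau$ close to the constant $\pi/(\pi-2\tau)$ after a slight smoothing. Since $\eta_\tau$ has degree one, it is homotopic to the identity. Hence $\tilde f:=\eta_\tau\circ f$ is homotopic to $f$, and $\tilde f^*[d\theta]=f^*[d\theta]$ still represents the generator of $H^1(\partial M;\Z)$ in the $\sph^1$-direction.

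Next we check the two required properties. The preimage $\tilde f^{-1}(+)$ contains $f^{-1}\bigl((-\tau,\tau)\bigr)$. This set is open and nonempty: $f$ is surjective, because it is nontrivial in $H^1$ and a non-surjective map to $\sph^1$ is null-homotopic. The same argument applies at $-$. For the gradient we have $|d\tilde f|\le \|\eta_\tau'\|_\infty|df|\le (1+C\tau)|df|$. Since $\partial M$ is compact, $|df|\le K$ for some constant $K$. Therefore
\[
H(g)\ge |df|+\delta\ge |d\tilde f|-C\tau K+\delta\ge |d\tilde f|+\delta/2
\]
once $\tau\le \delta/(2CK)$. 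The scalar curvature hypothesis does not involve $f$, so it is unchanged. Replacing $f$ by $\tilde f$ and $\delta$ by $\delta/2$ gives the lemma.

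The only delicate point is the construction of $\eta_\tau$ with the sup bound on $\eta_\tau'$ arbitrarily close to $1$ while $\eta_\tau$ stays smooth at the ends of the collapsed arcs. This is routine: take $\eta'_\tau=c_\tau\,\mu_\tau$, where $\mu_\tau$ is a smooth cutoff that equals $0$ on the collapsed arcs, equals $1$ outside their $\tau^2$-neighborhoods, and takes values in $[0,1]$. Normalize $c_\tau$ so that the integral over each complementary arc is $\pi$; then $c_\tau=1+O(\tau)$.
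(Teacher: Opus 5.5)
Your proposal is correct and matches the paper's argument: compose $f$ with a degree-one map $\sph^1\to\sph^1$ that collapses small arcs around the two poles and has Lipschitz constant $1+O(\tau)$, then use compactness of $\partial M$ to absorb the small loss into $\delta$. Your explicit construction of $\eta_\tau$ and your surjectivity argument, which guarantees that the open sets $B_\pm$ are nonempty, fill in details the paper leaves implicit.
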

	\begin{proof}
		This is the one-dimensional version of the perturbation argument in
		\cite[Lemma 3.1]{WangWangZhu}. Choose a smooth degree-one map
		\(q:\sph^1\to\sph^1\) that is constant on small arcs around the two poles and
		satisfies \(\|dq\|\leq1+\varepsilon\). Then \(q\circ f\) is homotopic to \(f\),
		has the same integral cohomology class, and is constant on the inverse images
		of those arcs. Since the inequalities in Theorem \ref{thm:sysStrict} are
		strict and \(S\) is compact, \(\varepsilon\) can be chosen so that they remain
		strict.
	\end{proof}

    From now on, by possibly working with a slightly smaller positive number $\delta$, we may assume without loss of generality that $f \colon \partial M\to\sph^1$ given in Theorem \ref{thm:sysStrict} satisfies the extra properties listed in Lemma \ref{lemma:poles-open}.
    
	Let \(\Psi\) be the distance  function from the point \(+\) on \(\sph^1\), and set
	\[
	\mu_\partial=\cos(\Psi\circ f).
	\]
	Although \(\Psi\) is not smooth at the poles, \(\mu_\partial\) is smooth.
	
	Let 
	\begin{equation}
		\mathcal C=\{\textup{Caccioppoli sets }\Omega\subset M,~\partial(\overline{\partial^*\Omega\cap\mathring M})\subset \partial M\setminus (B_-\cup B_+),~B_-\subset\Omega\}.
	\end{equation}
	Consider the following energy functional:
	\begin{equation}
		\mathcal A(\Omega)=\mathcal H^3_g(\partial^*\Omega\cap \mathring{M})-\int_{\partial^*\Omega\cap \partial M} \mu_\partial d\mathcal H^3_g
	\end{equation}
	for every $\Omega\in\mathcal C$.
	By \cite[Lemma 2.4]{WangWangZhu}, a minimizer $(\Omega,\partial\Omega,g_\Omega=g|_\Omega)$ exists. In particular, by the assumption that $\partial M$ is strictly mean convex (i.e. $H(g)\geq |df|+\delta$), and $\mu_\partial=\pm 1$ on the open sets $B_\pm$, the maximal principle yields that the minimizer is striclty away from $B_\pm$; see \cite[Lemma 2.4 and Appendix A]{WangWangZhu}. Moreover, 
    since \(\dim M=4\), the minimizer $\Omega$ admits no interior or boundary singularities. The minimizer satisfies the following first  and second variational formulas.
	\begin{enumerate}

		\item First variation: 
		Let $Y=\overline{\partial\Omega\cap\mathring M}$. Then $Y$ is a smooth $3$-manifold with boundary $\partial Y\subset \partial M\setminus(B_+\cup B_-)$. In $M$, the hypersurface $Y$ has zero mean curvature. Moreover, $J(z)=\Psi(f(z))$ for every $z\in \partial Y$, where $J(z)$ is the contact angle between $Y$ and $\partial M$ at $z$.
		\item Stability inequality: 
		For every $\varphi\in C^\infty(Y)$, we have
		\begin{equation}
			\begin{split}
				0\leq &\int_Y (|\nabla\varphi|^2-(\textup{Ric}_g(\nu_Y,\nu_Y)+|A_Y|^2)\varphi^2)d\mathcal H^3_g\\
				&+\int_{\partial Y}(H_{\partial Y}(g)-\frac{H(g)}{\sin J}+\frac{1}{\sin J}\langle \vec n,\nabla(\Psi\circ f)\rangle)\varphi^2 d\mathcal H^2_g,
			\end{split}
		\end{equation}
		where $\textup{Ric}_g$ denotes the Ricci curvature of $g$, $\nu_Y$ is the unit normal vector to $Y\subset M$, $A_Y$ is the second fundamental form of $Y \subset M$, $H_{\partial Y}(g)$ denotes the mean curvature of $\partial Y$ in $Y$, and $\vec n$ denotes the upward unit normal vector to $\partial Y$ in $\partial M$.
	\end{enumerate}
	
	\begin{lemma}\label{lem:first-stability}
		For every
		\(\varphi\in C^\infty(Y)\),
		\begin{align}
			0\leq{}&
			\int_Y\left(
			|\nabla\varphi|^2
			+\frac12\bigl(\Sc_Y-\sigma-\delta\bigr)\varphi^2
			\right)d\mathcal H_g^3 \notag\\
			&+\int_{\partial Y}(H_{\partial Y}-\delta)\varphi^2\,
			d\mathcal H_g^2. \label{eq:first-reduced-stability}
		\end{align}
	\end{lemma}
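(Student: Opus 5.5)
The plan is to start from the stability inequality for the capillary minimal hypersurface $Y$ and rewrite the interior and boundary integrands separately.

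\emph{Interior term.} Since $Y$ is minimal in $M$, the twice-traced Gauss equation gives
\[
\textup{Ric}_g(\nu_Y,\nu_Y)+|A_Y|^2=\tfrac12\bigl(\Sc(g)-\Sc_Y+|A_Y|^2\bigr).
\]
Dropping $|A_Y|^2\ge0$ and using $\Sc(g)\geq\sigma+\delta$, we get
\[
-(\textup{Ric}_g(\nu_Y,\nu_Y)+|A_Y|^2)\leq \tfrac12(\Sc_Y-\sigma-\delta).
\]
This matches the interior integrand of \eqref{eq:first-reduced-stability}.

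\emph{Boundary term.} It suffices to prove the pointwise bound
\[
-\frac{H(g)}{\sin J}+\frac{1}{\sin J}\langle\vec n,\nabla(\Psi\circ f)\rangle\leq-\delta
\qquad\text{on }\partial Y.
\]
Three observations give this.
\begin{enumerate}
\item Since $\partial Y\subset\partial M\setminus(B_+\cup B_-)$ and $J=\Psi\circ f$, we have $J\in(0,\pi)$. Hence $0<\sin J\leq1$.
\item $\Psi$ is $1$-Lipschitz on $\sph^1$ and smooth away from the poles. Since $\vec n$ is a unit vector, $|\langle\vec n,\nabla(\Psi\circ f)\rangle|\leq|df|$.
\item By hypothesis, $H(g)\geq|df|+\delta$.
\end{enumerate}
Combining these,
\[
\frac{-H(g)+\langle\vec n,\nabla(\Psi\circ f)\rangle}{\sin J}\leq\frac{-\delta}{\sin J}\leq-\delta.
\]
The last inequality holds because the numerator is negative and $0<\sin J\le1$.

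Adding the two estimates to the stability inequality gives \eqref{eq:first-reduced-stability}. The only delicate points are checking the sign conventions in the boundary term and keeping $J$ away from $0$ and $\pi$. The latter is guaranteed because the minimizer stays strictly away from $B_\pm$. We also need $\Psi\circ f$ to be smooth near $\partial Y$, which holds because $f(\partial Y)$ avoids the poles.
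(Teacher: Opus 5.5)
Your proposal is correct and follows the paper's proof step for step. For the interior term you use the Gauss equation for the minimal $Y$, drop $|A_Y|^2$, and apply $\Sc(g)\geq\sigma+\delta$; for the boundary term you combine $0<J<\pi$, the bound $|\langle\vec n,\nabla(\Psi\circ f)\rangle|\leq|df|$ (from $|\nabla\Psi|=1$), and $H(g)\geq|df|+\delta$, and you also spell out the final step $-\delta/\sin J\leq-\delta$, which the paper leaves implicit.
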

	\begin{proof}
		Because \(Y\) is minimal, the Gauss equation gives
		\[
		-\bigl(\operatorname{Ric}_g(\nu_Y,\nu_Y)+|A_Y|^2\bigr)
		=-\frac12\Sc(g)+\frac12\Sc_Y-\frac12|A_Y|^2
		\leq\frac12(\Sc_Y-\sigma-\delta-|A_Y|^2).
		\]
		Moreover,
		\(\partial Y\) is a compact subset of \(S\setminus(B_-\cup B_+)\), so
		\(0<J<\pi\) on $\partial Y$. Note that 
		\begin{equation}
			\langle\vec n,\nabla(\Psi\circ f)\rangle_z=\langle\nabla\Psi,df(\vec n)\rangle_{f(z)}.
		\end{equation}
        Since $\Psi$ is the distance function from the point $+$ on $\sph^1$,  we have $|\nabla \Psi| = 1$, hence 
        \[  |\langle\vec n,\nabla(\Psi\circ f)\rangle_z|\leq |df|.\]
		Therefore
		\begin{equation}
			H(g)-\langle\vec n,\nabla(\Psi\circ f)\rangle\geq H(g)-|df|\geq \delta>0
		\end{equation}
		This completes the proof.
	\end{proof}
	
	We now examine the topology of $Y$. Let \(I_0\) and \(I_1\)
	be the two components of \(\sph^1\setminus\{\pm\}\), and set
	\[
	\partial_j Y=f^{-1}(I_j)\subset \partial Y,\qquad j=0,1.
	\]
	Each \(\partial_j Y\) is a union of connected components of \(\partial Y\).
	
	\begin{lemma}\label{lem:first-topology}
		After replacing \(Y\) by one of its connected components, both \(\partial_0 Y\) and
		\(\partial_1 Y\) are nonempty and
		\begin{equation}\label{eq:gamma-nonzero}
			0\neq[\partial_0 Y]\in H_2(M;\mathbb Z).
		\end{equation}
	\end{lemma}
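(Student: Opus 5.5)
We argue by homology on the boundary. Write $S=\partial M\cong\sph^2\times\sph^1$ and let $\Omega$ be the minimizer, so that $\partial\Omega=Y\cup(\partial^*\Omega\cap S)$. Because $\Omega\supset B_-$ and $\Omega$ stays strictly away from $B_+$, the region $\Omega\cap S$ is a smooth compact $3$-manifold in $S$ with boundary $\partial Y$. It contains the open set $B_-$ and misses $B_+$.

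First we show that $[\partial Y]\neq0$ in $H_2(M;\Z)$, or more usefully that $[\partial_0Y]\ne 0$. Choose a point $\theta_0\in I_0$ that is a regular value of $f$. Then $\Sigma_0:=f^{-1}(\theta_0)$ is a closed surface in $S$. Since $f^*[d\theta]$ is a generator of $H^1(S;\Z)$ in the $\sph^1$-direction, $[\Sigma_0]$ is Poincaré dual to it, so $[\Sigma_0]=\pm[\sph^2\times\{pt\}]$ in $H_2(S)$. The inclusion $S\to M$ induces an isomorphism on $H_2$, so $[\Sigma_0]\ne0$ in $H_2(M)$.

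Next consider the arc $I_0$, running from $-$ to $+$. The set $f^{-1}(I_0)\subset S$ is an open set whose closure has boundary in $B_-\cup B_+$. Intersecting it with $\Omega\cap S$ gives a $3$-chain $R_0:=\overline{(\Omega\cap S)\cap f^{-1}(I_0')}$, where $I_0'\subset I_0$ is the closed subarc from near $-$ up to $\theta_0$. After choosing regular values suitably, the boundary of $R_0$ is
\[
\partial R_0=\partial_0Y\cap f^{-1}(I_0')\;-\;\Sigma_0\;+\;(\text{pieces in } B_-),
\]
with the pieces in $B_-$ cancelling against $f^{-1}$ of the endpoint near $-$. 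To make this clean, it is easier to compare $\partial_0Y$ with $f^{-1}(\theta)$ for $\theta$ near the pole $-$. Since $B_-\subset\Omega$, the level set $f^{-1}(\theta)$ near $-$ lies inside $\Omega\cap S$. Near $+$, $f^{-1}(\theta)$ lies outside $\Omega$. Hence within the strip $f^{-1}(I_0)$, the $3$-chain $(\Omega\cap S)\cap f^{-1}([\theta_-,\theta_+])$ has boundary $\partial_0Y$ (suitably oriented and truncated), together with $f^{-1}(\theta_-)$ with a sign, and nothing at $\theta_+$. This gives $[\partial_0Y]=\pm[f^{-1}(\theta_-)]\ne0$ in $H_2(S)$, and therefore in $H_2(M)$. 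The same argument on $I_1$ gives $[\partial_1Y]=\mp[\cdot]\neq0$. In particular both are nonempty, and $[\partial_0Y]+[\partial_1Y]=[\partial Y]=0$ in $H_2(M)$, since $\partial Y$ bounds $Y$.

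It remains to pass to a connected component. Decompose $Y=\bigcup_k Y_k$ into components. Then $[\partial_0Y]=\sum_k[\partial_0Y_k]$, so some $k$ has $[\partial_0Y_k]\ne0$. For that component, $[\partial_1Y_k]=[\partial Y_k]-[\partial_0Y_k]=-[\partial_0Y_k]\neq0$, because $\partial Y_k$ bounds $Y_k\subset M$. Hence $\partial_1Y_k\ne\emptyset$ as well, and we replace $Y$ by $Y_k$.

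The main obstacle is the homological bookkeeping in the key step: showing that $(\Omega\cap S)\cap f^{-1}(I_0)$ really furnishes a homology in $S$ between $\partial_0Y$ and a regular fiber $f^{-1}(\theta)$. This requires using that $\partial Y$ avoids $B_\pm$, so that $f$ restricted to $\partial_0Y$ takes values in the open arc $I_0$, together with the transversality choices. We handle it via intersection numbers. For a loop $c=\{y\}\times\sph^1\subset S$ transverse to everything, $\#(c\cap\partial_0 Y)$ equals the net number of times $c$ exits $\Omega\cap S$ while $f\in I_0$. Along $c$, $f$ winds once and $\Omega$-membership switches from "in" near $-$ to "out" near $+$ within each passage through $I_0$, so the algebraic count is $\pm1$. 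This shows $[\partial_0Y]\cdot[c]=\pm1$, hence $[\partial_0Y]\neq0$.
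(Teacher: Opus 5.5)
Your proof is correct and follows essentially the paper's route. The region $(\Omega\cap\partial M)\cap f^{-1}([\theta_-,\theta_+])$ is a cobordism from $\partial_0Y$ to a regular fiber $f^{-1}(\theta_-)$, where $\theta_\pm$ are regular values close enough to the poles that $f^{-1}(\theta_-)\subset\Omega$ and $f^{-1}(\theta_+)$ is disjoint from $\Omega$ (this uses compactness and $\partial Y\cap B_\pm=\emptyset$). Poincaré duality makes that fiber a generator, and any component with $[\partial_0Y_k]\neq 0$ then has $[\partial_1Y_k]=-[\partial_0Y_k]\neq 0$. Drop your preliminary chain $R_0$ built at an arbitrary regular value $\theta_0$, since that fiber need not lie in $\Omega$; the clean version and your intersection-number check against $\{y\}\times\sph^1$ are both sound.
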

	
	\begin{proof}
	Let us denote 
	\[
		W=\Omega\cap\partial M.
	\]
	By construction, $B_-$ is contained in the interior of $W$, while
	$B_+$ is disjoint from $W$. Since $\partial M$ is compact,
	there exist neighborhoods $J_-$ and $J_+$ of $-$ and $+$,
	respectively, such that
	\[
		f^{-1}(J_-)\subset W
		\textup{ and }
		f^{-1}(J_+)\cap\overline W=\emptyset.
	\]
	
	We choose regular values of the map $f$:
	\[
		x_-\in I_0\cap J_-
		\textup{ and }
		x_+\in I_0\cap J_+,
	\]
	and let $\gamma\subset I_0$ be the closed arc from $x_-$ to $x_+$. Since
	$\partial W$ is disjoint from $f^{-1}(J_-\cup J_+)$, we have
	\[
		\partial_0Y=\partial W\cap f^{-1}(\gamma).
	\]
	The space 	$W\cap f^{-1}(\gamma) \subset \partial M$
	is an oriented cobordism  between $\partial_0Y$ and
	the regular fiber $f^{-1}(a)$.  Consequently,
	\[
		[\partial_0Y]= [f^{-1}(a)]
		\qquad\text{in }H_2(\partial M;\mathbb Z),
	\]
	up to the choices of orientations. 
	Because $a$ is a regular value, $[f^{-1}(a)]$ is the 
	Poincar\'e dual of $f^*([d\theta])$, 
	up to the choice of orientation. Since $f^*([d\theta])$ is a generator
	of $H^1(\partial M;\mathbb Z)$, the class $[f^{-1}(a)]$ is a
	 generator of $H_2(\partial M;\mathbb Z)$. Its image under
	\[
		H_2(\partial M;\mathbb Z)\xrightarrow{\ \cong\ } H_2(M;\mathbb Z)
	\]
	is again a generator. Hence $
		0\neq[\partial_0Y]\in H_2(M;\mathbb Z).$

    Given  a connected component $Y_\lambda$ of $Y$, if its boundary $\partial Y_\lambda$ lies in a single $\partial_j Y$, then $[\partial Y_\lambda] = 0$  in $H_2(M)$, because $\partial Y_\lambda$ bounds $Y_\lambda$. Removing all such connected components leaves a union of connected components intersecting both $\partial_0 Y$ and $\partial_1 Y$. Since $[\partial_0 Y]\neq 0$,  there is at least one  connected component $Y_\lambda$ of $Y$ such that $[\partial_0 Y_\lambda] \neq 0$.  we
		replace $Y$ by $Y_\lambda$. This proves the lemma. 
	\end{proof}
	
	\subsection{Second step: weighted minimal hypersurface}
	Let $Y$ the capillary hypersurface we obtained in the previous subsection. Consider the following operator 
    \[ \mathcal L=-\Delta_Y+\frac12\Sc_Y-\frac 1 2(\sigma+\frac\delta 2)\]
    and the corresponding eigenvalue problem on $Y$:
	\begin{equation}\label{eq:first-eigenproblem}
		\begin{cases}
			\displaystyle \mathcal L\psi=\lambda_1 \psi\qquad
			&\textup{ on }Y,\\
			\partial_{\nu_{\partial Y}}\psi=(-H_{\partial Y}+\delta)\psi
			&\textup{ on }\partial Y.
		\end{cases}
	\end{equation}
	Let $u$ be an eigenfunction of the first eigenvalue. Then $u$ is nowhere vanishing, hence may be chosen to be strictly positive. It follows from 
	\eqref{eq:first-reduced-stability} that  
	\begin{equation}\label{eq:first-eigenvalue}
		\lambda_1\geq\frac{\delta}{4}\geq 0.
	\end{equation}
	Equip $Y\times\sph^1$ with
	\[
	g_1=g|_Y+u^2dt^2.
	\]
	The warped-product formulas and \eqref{eq:first-eigenproblem} give
	\begin{align}
		\Sc(g_1)
		&=\Sc_Y-2\frac{\Delta_Y u}{u}
		\geq\sigma+\frac\delta 2, \label{eq:first-warp-scalar}\\
		H_{\partial(Y\times\sph^1)}(g_1)
		&=H_{\partial Y}+\frac{\partial_{\nu_{\partial Y}}u}{u}
		=\delta. \label{eq:first-warp-mean}
	\end{align}

    \begin{lemma}\label{lem:second-minimizer}
	There exists a smooth closed surface $Z\subset\mathring Y$ such that
	\[
		W=Z\times\sph^1\subset(Y\times\sph^1,g_1)
	\]
	is an $\sph^1$-invariant minimal hypersurface separating
	$\partial_0Y\times\sph^1$ from $\partial_1Y\times\sph^1$.
	Every connected component of $W$ is two-sided and stable. Moreover,
	up to an appropriate choice of  the orientation of $Z$,
	\[
		[Z]=[\partial_0Y]
		\qquad\text{in }H_2(M;\mathbb Z).
	\]
\end{lemma}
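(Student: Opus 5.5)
We find $W$ by minimizing area in the warped product $(Y\times\sph^1,g_1)$ among hypersurfaces separating the two boundary portions, using the boundary mean curvature $H=\delta>0$ from \eqref{eq:first-warp-mean} as a barrier. Since $g_1=g|_Y+u^2dt^2$ is $\sph^1$-invariant, the area of an invariant hypersurface $Z\times\sph^1$ is $2\pi\int_Z u\,d\mathcal H^2_g$. It is therefore natural to minimize the weighted area
\[
\mathcal A_u(E)=\int_{\partial^*E\cap\mathring Y}u\,d\mathcal H^2_g
\]
over Caccioppoli sets $E\subset Y$ with $\partial_0Y\subset E$ and $\partial_1 Y\cap\overline E=\emptyset$. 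Here $\partial_0 Y$ and $\partial_1 Y$ are both nonempty by Lemma \ref{lem:first-topology}. The class is nonempty, since a collar of $\partial_0Y$ works. Because $0<\min u\le\max u$, this weighted area is comparable to ordinary area, so BV compactness and lower semicontinuity give a minimizer $E$.

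The first point to check is that $Z=\overline{\partial^*E\cap\mathring Y}$ stays away from $\partial Y$. We use that $\partial Y$ is strictly weighted mean convex: the weighted mean curvature $H_{\partial Y}+\partial_{\nu}u/u$ of $\partial Y$ equals $\delta>0$ by \eqref{eq:first-eigenproblem}. The standard maximum-principle argument, as in \cite[Lemma 2.4 and Appendix A]{WangWangZhu}, then shows that $Z$ cannot touch $\partial Y$. Concretely, one pushes $\partial^*E$ away from $\partial Y$ along the inward normal flow in a thin collar, where the equidistant surfaces still have positive weighted mean curvature; this strictly decreases $\mathcal A_u$. Hence $Z\subset\mathring Y$. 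Since $\dim Y=3$, weighted minimizers (minimizers for the conformal metric $u\,g|_Y$ in this dimension, or directly by regularity for elliptic integrands) are smooth closed surfaces. Then $W=Z\times\sph^1$ is minimal and $\sph^1$-invariant in $g_1$, because the Euler--Lagrange equation of $\mathcal A_u$ is $H_Z+\partial_{\nu_Z}u/u=0$, which is exactly the mean curvature of $Z\times\sph^1$ in $g_1$. Each component of $W$ is two-sided, since $Z$ bounds $E$ in the orientable $Y$. Stability of $W$ under $\sph^1$-invariant variations follows from minimality of $E$. Stability under all variations follows because, for this warped-product structure, the first eigenfunction of the Jacobi operator of $W$ is $\sph^1$-invariant: averaging over $\sph^1$ does not increase the quadratic form, so the lowest eigenvalue is attained by invariant functions, and these have nonnegative energy by minimality of $\mathcal A_u$.

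For the homology statement, $E$ has boundary $\partial E=Z\cup\partial_0Y$ (with orientations), after possibly including portions of $\partial Y$ lying in $\overline E$. The constraints force those portions to be exactly $\partial_0Y$. So $E$ is a $3$-chain in $Y\subset M$ with $\partial[E]=[Z]-[\partial_0Y]$, which gives $[Z]=[\partial_0Y]$ in $H_2(M;\Z)$ for a suitable orientation of $Z$. Separation of $\partial_0Y\times\sph^1$ from $\partial_1Y\times\sph^1$ is immediate from the construction.

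The main obstacle is the boundary barrier step in the second paragraph. One has to verify rigorously that strict positivity of the weighted mean curvature of $\partial Y$ prevents the minimizer from touching $\partial Y$, including near the corners where $\partial_0 Y$ and $\partial_1 Y$ meet. I would handle this with a foliation of a collar of $\partial Y$ by equidistant surfaces $\{d=s\}$, whose weighted mean curvature stays $\ge\delta/2$ for small $s$. A calibration/cut-and-paste comparison, replacing $E$ by $E\cup\{d\le s\}$ near $\partial_0Y$ and by $E\setminus\{d\le s\}$ near $\partial_1Y$, then strictly decreases $\mathcal A_u$ unless $Z$ avoids the collar.
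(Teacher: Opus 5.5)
Your proposal is correct and follows essentially the paper's argument. Minimizing $\int_{\partial^*E\cap\mathring Y}u\,d\mathcal H^2_g$ on $Y$ is exactly minimizing $g_1$-perimeter among $\sph^1$-invariant Caccioppoli sets in $Y\times\sph^1$; the boundary barrier comes from $H_{\partial Y}+\partial_\nu u/u=\delta>0$, and the homology identity comes from the region $E$ with oriented boundary $Z-\partial_0Y$. Your added details (regularity via the conformal metric $u\,g|_Y$ in dimension three, full stability via $\sph^1$-invariance of the first Jacobi eigenfunction) usefully fill in points the paper leaves implicit, and the corner worry you flag as the main obstacle does not arise, since $\partial_0Y$ and $\partial_1Y$ are disjoint unions of components of the smooth closed surface $\partial Y$.
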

	
	\begin{proof}
		We minimize the perimeter among \(\sph^1\)-invariant Caccioppoli sets whose trace is
		one on \(\partial_0Y\times\sph^1\) and zero on
		\(\partial_1Y\times\sph^1\). The strict mean convexity in
		\eqref{eq:first-warp-mean} and the maximum principle keep the minimizer away
		from the boundary. Since $Y\times \mathbb S^1$ has dimension $4$, standard regularity implies that the interior boundary of the minimizer is  a smooth closed hypersurface $w$. Invariance under the circle action implies $W=Z\times\sph^1$. In fact, the minimizing region is $\sph^1$ invariant, hence descends to a region
		in $Y$ whose oriented boundary is $Z-\partial_0 Y$. This proves the last assertion of the lemma. 
	\end{proof}
	
	By Lemmas \ref{lem:first-topology} and \ref{lem:second-minimizer}, at least one
	connected component $Z_\lambda$ of $Z$ represents a nonzero class in
	$H_2(M;\mathbb Z)$. We replace $Z$ by $Z_\lambda$ and
	$W$ by $Z_\lambda\times\sph^1$. For notational simplicity, we continue to denote them by $Z$ and $W$, respectively. Since $W$ is a two-sided stable minimal hypersurface in $(Y\times\sph^1,g_1)$, the stability inequality and the Gauss equation give 
	\begin{equation}\label{eq:second-stability}
		0\leq
		\int_W\left(
		|\nabla\varphi|^2+\frac12(\Sc_W(g_1)-\Sc(g_1)-|A_W|^2)\varphi^2
		\right)d\mathcal H_{g_1}^3
	\end{equation}
    for every 	\(\varphi\in C^\infty(W)\).
In particular, after dropping the nonpositive term $-\frac12|A_W|^2\varphi^2$, we obtain the  inequality 
\begin{equation}\label{eq:second-stability-weak} 0\leq \int_W \left( |\nabla\varphi|^2 + \frac12 \bigl( \Sc_W(g_1)-\Sc(g_1) \bigr)\varphi^2 \right) d\mathcal H_{g_1}^3. \end{equation}

    Let $\mu_1$ be the first eigenvalue of the operator \[ -\Delta_W + \frac12 \big( \Sc_W(g_1)-\sigma-\frac{\delta}{4} \big), \] and let $v>0$ be a corresponding first eigenfunction: 
	\begin{equation}\label{eq:second-eigenproblem}
		\left(-\Delta_W+\frac12(\Sc_W(g_1)-\sigma-\frac\delta 4)\right)v=\mu_1v.
	\end{equation}
	Equations \eqref{eq:first-warp-scalar} and
	\eqref{eq:second-stability} imply
	\begin{equation}\label{eq:second-eigenvalue}
		\mu_1\geq\frac{\delta}{4}\geq 0.
	\end{equation}
	On \(W\times\sph^1\), we define the warped-product metric
	\[
	g_2=g_1|_W+v^2ds^2.
	\]
	Then
	\begin{equation}\label{eq:second-warp-scalar}
		\Sc(g_2)=\Sc_W(g_1)-2\frac{\Delta_W v}{v}
		\geq\sigma+\frac{3\delta}{4}.
	\end{equation}
	
	The metric and the operator in \eqref{eq:second-eigenproblem} are invariant
	under the action of the first circle factor. Since an  eigenfunction of the first eigenvalue  is
	unique up to scaling, $v$ is also invariant. We therefore regard $u$
	and $v$ as positive functions on $Z$, and write
	\[
	g_2=g|_Z+u^2dt^2+v^2ds^2.
	\]
	A direct computation of warped product metrics shows that 
	\eqref{eq:second-warp-scalar} becomes
	\begin{equation}\label{eq:double-warp}
		\Sc(g_2)
		=\Sc_Z
		-2\frac{\Delta_Z u}{u}
		-2\frac{\Delta_Z v}{v}
		-2\langle\nabla\log u,\nabla\log v\rangle
		\geq\sigma+\frac{3\delta}{4}.
	\end{equation}
	Because \(Z\) is closed, we have
	\[
	\int_Z\frac{\Delta_Z u}{u}
	=\int_Z|\nabla\log u|^2 \textup{ and }
	\int_Z\frac{\Delta_Z v}{v}
	=\int_Z|\nabla\log v|^2.
	\]
	Integrating \eqref{eq:double-warp} over $Z$ and applying the Gauss--Bonnet formula give
	\begin{align}
		&(\sigma+3\delta/4)\operatorname{Area}_g(Z) \notag\\
		&\leq4\pi\chi(Z)
		-2\int_Z\bigl(|\nabla\log u|^2+|\nabla\log v|^2+\langle \nabla\log u,\nabla\log v\rangle\bigr)
		\,d\mathcal H_g^2 \notag\\
		&\leq4\pi\chi(Z). \label{eq:area-final}
	\end{align}
	Since the surface \(Z\) is connected and oriented,  the positivity of the left-hand side of
	\eqref{eq:area-final} forces \(\chi(Z)>0\), hence \(Z\cong\sph^2\) and
	\(\chi(Z)=2\). Since \(0\neq[Z]\in H_2(M;\mathbb Z)\),
	\[
	\operatorname{sys}_2(g)
	\leq\operatorname{Area}_g(Z)
	\leq\frac{8\pi}{\sigma+3\delta/4}
	<\frac{8\pi}{\sigma}.
	\]
    This completes the proof of Theorem \ref{thm:sysStrict}.

	\bibliographystyle{abbrv}
	\bibliography{refsystole}
\end{document}